\begin{document}

\catchline{}{}{}{}{} 

\markboth{Lijuan Sheng}{Limit cycles of a class of piecewise smooth Li\'enard systems}

\title{\boldmath \uppercase{Limit cycles of a class of piecewise smooth Li\'enard systems}}

\author{LIJUAN SHENG}

\address{Department of Mathematics, Shanghai Normal University\\ Shanghai 200234, P. R. China\\
shenglj@shnu.edu.cn}

\maketitle

\begin{history}
\received{(to be inserted by publisher)}
\end{history}

\begin{abstract}
In this paper, we study the problem of limit cycle bifurcation in two piecewise polynomial systems of Li\'enard type with multiple parameters.
Based on the developed Melnikov function theory, we obtain the maximum number of limit cycles of these two systems.
\end{abstract}

\keywords{Limit cycles, Poincar\'e bifurcation, piecewise, Melnikov function.}

\section{Introduction}
The study of the polynomial Li\'enard differential equation
$\ddot{x}+\widetilde{f}(x)\dot{x}+\widetilde{g}(x)=0$ has a very long history, where $\widetilde{f}$ and $\widetilde{g}$ are polynomials with degree $m,~n$ respectively.
Since it was first introduced in \cite{Lie}, many researchers have concentrated on its maximum number of limit cycles and their location \cite{HR,LL,X2,junmin}. As it is known, the equation is equivalent to a  planar system of the form
\begin{equation}\label{led1}
\dot{x}=y,~\dot{y}=-\widetilde{f}(x)y-\widetilde{g}(x).
\end{equation}

In recent years, an increasing amount of people are interested in finding the maximum number of limit cycles of piecewise smooth dynamical systems on the plane. One of the main methods is the Melnikov function method developed in \cite{LH,MS}. In addition, as an attempt to make further investigation of piecewise near-Hamiltonian system, the authors of \cite{HX} and \cite{X} introduced a new parameter $\lambda$ in the system as well as its Melnikov function. It turns out that, in some cases, by using the Melnikov function depending on the new parameter, one might find more limit cycles. In this paper, based on the main results of \cite{X,LH}, we will study the piecewise situation of \eqref{led1}. More precisely, we suppose that system \eqref{led1} is defined on two half-planes $G^+$ and $G^-$ separated by a straight line $l$, and take $\tilde{f},~\tilde{g}$ in \eqref{led1} as 
\begin{equation*}
\widetilde{f}(x)=\epsilon f(x),
~~\widetilde{g}(x)=\left\{\begin{aligned}
&x + \epsilon  g(x),&&(x,y)\in G^+,\\
&x - \epsilon  g(x),&&(x,y)\in G^-,
\end{aligned}
\right.
\end{equation*}
where $f$, $g$ are polynomials of degree $m$, $n$ respectively, and $\epsilon>0$ is a small parameter. Then system \eqref{led1} becomes
\begin{equation}\label{main}
\left(\begin{aligned}&\dot{x}\\&\dot{y}\end{aligned}\right)
=\left\{ \begin{aligned}
&\left( \begin{aligned}
&y\\
&-x-\epsilon \Big(yf(x)+g(x)\Big)
\end{aligned}
\right),        &&(x,y)\in G^+,\\
&\left( \begin{aligned}
&y\\
&-x-\epsilon \Big(yf(x)-g(x)\Big)
\end{aligned}
\right),     &&(x,y)\in G^-.
\end{aligned}
\right.
\end{equation}

The following theorems are the main results of this paper:
\begin{theorem}\label{thi}
Let $G^{\pm}=\{(x,y)|y\in \mathbb{R}^{\pm}\}$. Then system \eqref{main} can have $[\frac{m}{2}]+2[\frac{n}{2}]+1$ limit cycles bifurcating from the periodic orbits of the system $\dot{x}=y,~\dot{y}=-x$.\end{theorem}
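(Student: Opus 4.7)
I would apply the Melnikov--function framework for piecewise smooth near--Hamiltonian systems from \cite{LH,X,HX} to the family of concentric circles $\Gamma_h:\{x^2+y^2=2h\}$, $h>0$, which are the periodic orbits of the unperturbed center $\dot x=y$, $\dot y=-x$.

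Since the unperturbed vector fields on the two sides of $y=0$ coincide, the Liu--Han piecewise Melnikov formula collapses to a sum of ordinary Pontryagin integrals over the two arcs $\Gamma_h^{\pm}=\Gamma_h\cap G^{\pm}$. Using $\dot x=y$ to convert $dt$--integrals into $dx$--integrals, and exploiting that $y$ reverses sign across $y=0$ while the coefficient of $g$ in \eqref{main} also reverses, the two $g$--contributions reinforce and so do the two $yf$--contributions, yielding
\begin{equation*}
M(h)=2\int_{-\sqrt{2h}}^{\sqrt{2h}}\sqrt{2h-x^2}\,f(x)\,dx+2\int_{-\sqrt{2h}}^{\sqrt{2h}}g(x)\,dx.
\end{equation*}
Expanding $f=\sum b_i x^i$ and $g=\sum a_j x^j$ and evaluating each monomial integral (via $x=\sqrt{2h}\sin\theta$ for the first integral, direct antidifferentiation for the second) shows that only the even--parity coefficients survive, producing the structure $M(h)=p(h)+\sqrt{h}\,q(h)$ with $p$ of degree $[m/2]+1$ contributing $[m/2]+1$ free coefficients and $q$ of degree $[n/2]$ contributing $[n/2]+1$ free coefficients.

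This plain Melnikov function alone is too meager to realize the bound claimed by the theorem, so the next step is to invoke the enhanced Melnikov framework of \cite{X,HX}: the auxiliary parameter $\lambda$ produces a larger admissible family $M_{\lambda}(h)$ whose span contains $[n/2]$ further independent basis functions of $\sqrt{h}$--times--integer--power type, so that the attainable Melnikov functions sweep out a $([m/2]+2[n/2]+2)$--dimensional function space on $(0,\infty)$.

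The final step counts zeros and passes to limit cycles. The substitution $u=\sqrt{h}$ converts $M_{\lambda}(h)$ into a sparse polynomial in $u$ with exactly $[m/2]+2[n/2]+2$ nonzero monomials; a Descartes/Chebyshev--system argument then shows that the maximum number of simple positive zeros is $[m/2]+2[n/2]+1$, attained on an open set of parameters $(b_{2k},a_{2k},\lambda)$. Each simple zero $h_{*}$ produces, by the standard displacement--function argument for piecewise planar systems, a limit cycle of \eqref{main} near $\Gamma_{h_{*}}$ for $\epsilon>0$ small. The main technical obstacle is verifying the Chebyshev property of the enlarged basis of integer and half--integer powers of $h$ after the $\lambda$--enhancement: this requires a Wronskian or determinant computation to exclude hidden linear relations among them, ensuring that all $[m/2]+2[n/2]+1$ zeros can be realized simultaneously.
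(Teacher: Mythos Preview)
Your outline matches the paper's strategy---compute the piecewise Melnikov function, see that the plain first-order function gives only $[m/2]+[n/2]+1$ zeros, and then use the $\lambda$-enhancement from \cite{HX,X} to gain the remaining $[n/2]$---but the crux of the argument is precisely the step you leave as a black box, and the way you describe it does not match how the extra terms actually arise.

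Concretely, the paper does not simply view $M_\lambda$ as a larger family and look for more basis functions in its span. It embeds \eqref{main} into the two-parameter system
\[
\dot x=y,\quad \dot y=-x-\lambda\,\mathrm{sgn}(y)\,g(x)-\epsilon\big[y(f_0+\lambda f_1)+\mathrm{sgn}(y)(g_0+\lambda g_1)\big],
\]
so that $\lambda$ perturbs the \emph{Hamiltonian} part, not just the right-hand side. One then expands $M(h,\lambda)=M_0(h)+\lambda M_1(h)+O(\lambda^2)$ and deliberately chooses $f_0,g_0$ \emph{odd} so that $M_0\equiv 0$; this is the step you are missing. With $M_0$ killed, the zeros of $M$ are governed by $M_1$, and the extra half-integer powers $h^{l+1/2}$ for $[n/2]<l\le 2[n/2]$ come from the boundary term $\mathcal{L}(p_0^+)-\mathcal{L}(p_0^-)$ in Xiong's formula for $M_1$: that term contains $\partial a/\partial\lambda|_{\lambda=0}$ and $\partial b/\partial\lambda|_{\lambda=0}$, which bring in $\int_0^{\pm\sqrt{2h}}g$, and its product with the odd $g_0(\pm\sqrt{2h})$ produces exactly $\sum_{l=0}^{2[n/2]}(\cdot)h^{l+1/2}$. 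Without specifying this mechanism your proposal does not explain why the span grows by exactly $[n/2]$.

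Finally, you have misidentified the technical obstacle. After the substitution $u=\sqrt h$ the basis consists of distinct monomials $u^{2i+2}$ and $u^{2l+1}$, so the Chebyshev/Wronskian property is immediate and needs no work. What actually requires verification is that the coefficients $\tilde b_l$ for $l>[n/2]$---which are \emph{bilinear} in the free parameters $b_{2i+1}^{(0)}$ and $c_{2j}$---can be chosen independently of one another and of the remaining $\tilde a_i^{(1)},\tilde b_l^{(1)}$; that is where the algebraic checking goes.
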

\begin{theorem}\label{thii}
Let $G^{\pm}=\{(x,y)|x\in \mathbb{R}^{\pm}\}$. If $n\geq1$ $($ $n=0$, resp.$)$, then system \eqref{main} can have $2[\frac{m}{2}]+[\frac{n+1}{2}]$  $($ $2[\frac{m}{2}]+1$, resp.$)$ limit cycles bifurcating from the periodic orbits of the system $\dot{x}=y,~\dot{y}=-x$.
\end{theorem}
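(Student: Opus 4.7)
The plan is to derive the first-order Melnikov function $M(h,\lambda)$ of system \eqref{main} along the family of unperturbed periodic orbits $\Gamma_{h}:x^{2}+y^{2}=2h$, $h>0$, of the linear center $\dot{x}=y$, $\dot{y}=-x$, using the piecewise Melnikov framework of \cite{LH} together with the $\lambda$-enhancement of \cite{X}. Since here $G^{\pm}=\{x\in\mathbb{R}^{\pm}\}$, the switching line $\{x=0\}$ meets $\Gamma_{h}$ transversally (as $\dot{x}=y\neq 0$ at the crossing points) at $A=(0,\sqrt{2h})$ and $B=(0,-\sqrt{2h})$, splitting $\Gamma_{h}$ into a right arc $\Gamma_{h}^{+}\subset G^{+}$ running from $A$ to $B$ and a left arc $\Gamma_{h}^{-}\subset G^{-}$ running from $B$ back to $A$; these are the arcs along which the piecewise Melnikov integral is defined.

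I would then parametrize both arcs by $x=\sqrt{2h}\sin t$, $y=\sqrt{2h}\cos t$ with $t\in[0,\pi]$ on $\Gamma_{h}^{+}$ and $t\in[\pi,2\pi]$ on $\Gamma_{h}^{-}$, substitute $f(x)=\sum_{i=0}^{m}a_{i}x^{i}$ and $g(x)=\sum_{j=0}^{n}b_{j}x^{j}$, and reduce the resulting line integrals to elementary integrals of the form $\int_{0}^{\pi}\cos^{p}t\sin^{q}t\,dt$. Two features drive the cancellations: the central symmetry $(x,y)\mapsto(-x,-y)$ exchanges $\Gamma_{h}^{+}$ with $\Gamma_{h}^{-}$, while between the two regions $g(x)$ is replaced by $-g(x)$ whereas $f(x)$ is not. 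Together these impose a definite parity selection on the surviving monomials of $f$ and $g$; moreover, the $\lambda$-dependent term added to the Melnikov function in \cite{X} supplies further independent basis functions that were absent when $\lambda=0$. After simplification, $M(h,\lambda)$ emerges as a linear combination of explicit powers of $\sqrt{h}$, a mixture of integer and half-integer powers.

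Once the basis is identified, I would verify that these functions form a Chebyshev system on $(0,\infty)$, so that the coefficients --- which are linear in $a_{i}$, $b_{j}$, $\lambda$ --- can be tuned independently to realize the maximum number of simple positive zeros of $M$, namely $2[m/2]+[(n+1)/2]$ (or $2[m/2]+1$ when $n=0$, where the $g$-block degenerates to a single basis element). A standard Poincar\'e bifurcation argument then produces a hyperbolic limit cycle of \eqref{main} near each such $\Gamma_{h_{*}}$ for all sufficiently small $\epsilon>0$. The main obstacle is the middle step: correctly extracting the parity selection and showing that the contributions from $f$, $g$, and the new parameter $\lambda$ together span an independent system of the claimed size. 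In particular one must pinpoint exactly which integer-power basis functions $\lambda$ contributes, so as to double $[m/2]$ into $2[m/2]$ on the $f$-side, and exactly which $[(n+1)/2]$ of the $b_{j}$ survive; the degenerate case $n=0$ then needs a separate bookkeeping step.
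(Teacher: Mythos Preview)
Your overall framework is right, but the proposal misses the key mechanism and, as written, the plan would fail at the ``middle step'' you yourself flag.

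First, system \eqref{main} carries only the single small parameter $\epsilon$; there is no $\lambda$ in it. The direct first-order Melnikov function of \eqref{main} along $\Gamma_h$ is
\[
M(h)=\int_{\widehat{AB}}-\bigl(yf(x)+g(x)\bigr)\,dx+\int_{\widehat{BA}}-\bigl(yf(x)-g(x)\bigr)\,dx.
\]
Because both arc endpoints lie on $x=0$ and $g(x)\,dx$ is exact, each of $\int_{\widehat{AB}}g(x)\,dx$ and $\int_{\widehat{BA}}g(x)\,dx$ vanishes. Hence $M(h)=-\oint_{L_0}yf(x)\,dx$, a polynomial in $h$ with $[m/2]+1$ terms and at most $[m/2]$ positive zeros (this is exactly Lemma~\ref{2-M0}). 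So your expectation that ``$[(n+1)/2]$ of the $b_j$ survive'' by parity selection is wrong: with the switching line $x=0$, the $g$-block contributes nothing to the first Melnikov function, and you cannot reach $2[m/2]+[(n+1)/2]$ this way.

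What the paper actually does is embed \eqref{main} into an auxiliary two-parameter family \eqref{2} in which part of the $\pm g$ term is moved into the \emph{unperturbed} Hamiltonian at order $\lambda$, namely $H^\pm(x,y,\lambda)=\tfrac12(x^2+y^2)\pm\lambda\int_0^x g(s)\,ds$; the remaining perturbation carries independent polynomials $f_0,f_1,g_0,g_1$. One then takes $f_0$ odd so that $M_0\equiv 0$ and works with $M_1(h)$ from Lemma~\ref{lemx}. The $n$-dependence enters not through surviving $b_j$'s but through the cross term
\[
I_2(h)=\int_{\widehat{AB}}H_1^+ f_0(x)\,dt-\int_{\widehat{BA}}H_1^+ f_0(x)\,dt,\qquad H_1^+(x)=\int_0^x g(s)\,ds,
\]
which picks out the \emph{odd} coefficients $c_{2j+1}$ of $g$ and produces half-integer powers $h^{l+3/2}$ for $l=0,\dots,[m/2]+[(n+1)/2]-1$. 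A second half-integer block comes from $I_3(h)$ (odd part of $f_0$), and the integer powers $h^{l+1}$, $l=0,\dots,[m/2]$, come from $I_1(h)$ via the even part of $f_1$. This yields $2[m/2]+[(n+1)/2]+1$ independent monomials in $\sqrt h$; for $n=0$ the term $I_2$ vanishes and one is left with $2[m/2]+2$ monomials. Finally the substitution \eqref{sgn} (with $\operatorname{sgn}(x)$ in place of $\operatorname{sgn}(y)$) shows that system \eqref{main} arises inside this family, so the limit-cycle count transfers. Your write-up needs to make this embedding explicit and to locate the $n$-contribution in the $H_1^+ f_0$ cross term rather than in direct $g$-integrals.
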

The above two theorems will be proved in Sections 3 and 4.

\section{Melnikov functions with multiple parameters}

 Consider the following near-Hamiltonian system with multiple parameters
\begin{equation}\label{eqx}
\left(\begin{aligned}&\dot{x}\\&\dot{y}\end{aligned}\right)
=\left\{ \begin{aligned}
&\left( \begin{aligned}
&H_y^+(x,y,\lambda)+\epsilon p^+(x,y,\lambda)\\
&-H_x^+(x,y,\lambda)+\epsilon q^+(x,y,\lambda)
\end{aligned}
\right),        &&x\geq0,\\
&\left( \begin{aligned}
&H_y^-(x,y,\lambda)+\epsilon p^-(x,y,\lambda)\\
&-H_x^-(x,y,\lambda)+\epsilon q^-(x,y,\lambda)
\end{aligned}
\right),     &&x<0,
\end{aligned}
\right.
\end{equation}
where $0< \epsilon \ll \lambda \ll 1$, $H^{\pm}, p^{\pm}, q^{\pm}$ are $C^\infty$ functions of the following expressions
\begin{align*}
H^{\pm}(x,y,\lambda)&=H_0^{\pm}(x,y)+\lambda H_1^{\pm}(x,y)+O(\lambda^2),\\
p^{\pm}(x,y,\lambda)&=p_0^{\pm}(x,y)+\lambda p_1^{\pm}(x,y) +O(\lambda^2),\\
q^{\pm}(x,y,\lambda)&=q_0^{\pm}(x,y)+\lambda q_1^{\pm}(x,y)+O(\lambda^2).
\end{align*}
For the above system, we make the following assumptions:
\begin{alphlist}[(d)]
\item For small $\lambda$, there exists an open interval $I_\lambda$ such that \eqref{eqx}$|_{\epsilon=0}$ has a family of periodic orbits with clockwise orientation given by
$$L_\lambda(h): ~~~~H(x,y,\lambda)=h,~~h\in I_\lambda.$$
\item Each periodic orbit $L_\lambda(h)$ defined in (a) intersects the $y$-axis with two different points in turn, denoted by $A_\lambda(h)=(0,a(h,\lambda))$ and $B_\lambda(h)=(0,b(h,\lambda))$ with $a(h,\lambda)>b(h,\lambda)$. Denote that $A(h)=A_\lambda(h)|_{\lambda=0}$ and $B(h)=B_\lambda(h)|_{\lambda=0}$.
\end{alphlist}

 Consider the orbit of system \eqref{eqx} starting from $A_\lambda$. Let $B_\lambda^\epsilon$ denote its intersection point with the positive $y$-axis, see Fig. \ref{fig1}. Then \cite{LH} introduced a function $F$ by the following
    \begin{equation*}\label{eqF}
    H^+(B_\lambda^\epsilon)-H^+(A_\lambda)=\epsilon F(h,\lambda,\epsilon).
    \end{equation*}

\begin{figure}[h]
\centering
\includegraphics[width=5cm]{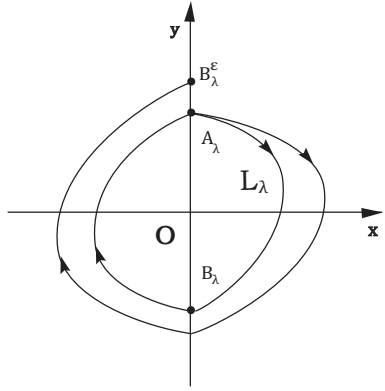}
\caption {The Poincar\'e map of system \eqref{eqx}.}\label{fig1}
\end{figure}

The function $F(h,\lambda,\epsilon)$ is called a bifurcation function of system \eqref{eqx}. Then, define the first order Melnikov function of \eqref{eqx} by $M(h,\lambda)=F(h,\lambda,0)$. 
Under the assumptions (a) and (b), \citet{LH} and \citet{MS} proved the following results respectively.
\begin{lemma}[Liu \& Han, 2010]\label{lemlh}
For $|\epsilon|$ small and $h\in I_\lambda$, $F(h,\lambda,\epsilon)\in C^\infty$. Moreover, \eqref{eqx} has a periodic solution $($a limit cycle, resp.$)$ near $L_\lambda(h_0)$ for $h_0\in I_\lambda$ if and only if $F$ has a zero $($an isolated zero, resp.$)$ in $h$ near $h_0$.
\end{lemma}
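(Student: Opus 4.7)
My plan is to build $F$ as the smooth factor coming out of the Poincar\'e return map along the $y$-axis section, and then convert zeros of $F$ into periodic orbits via transversality. First I would make the return map explicit as a composition of two ``half-maps.'' For $(h,\lambda,\epsilon)$ in a small neighborhood of any $(h_0,0,0)$, the orbit of \eqref{eqx} starting at $A_\lambda(h)=(0,a(h,\lambda))$ enters $G^+$ (the sign of $\dot x$ there being $H_y^+(A_\lambda)+O(\epsilon)\neq 0$ by the clockwise orientation) and, by smooth dependence on data together with the implicit function theorem applied to $x=0$, first returns transversally to the $y$-axis at a $C^\infty$ point $(0,y_1(h,\lambda,\epsilon))$ after a $C^\infty$ time. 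Feeding this into the $G^-$ flow and repeating yields a second smooth return $B_\lambda^\epsilon=(0,y_2(h,\lambda,\epsilon))$ on the positive $y$-axis. At $\epsilon=0$ the orbit is exactly $L_\lambda(h)$, so $y_2(h,\lambda,0)=a(h,\lambda)$ and $B_\lambda^0=A_\lambda$.

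Smoothness of $F$ is then immediate: the difference $\Phi(h,\lambda,\epsilon):=H^+(0,y_2(h,\lambda,\epsilon))-H^+(0,a(h,\lambda))$ is $C^\infty$ and vanishes identically at $\epsilon=0$, so by Hadamard's lemma $\Phi=\epsilon F$ with $F\in C^\infty$. For the equivalence part, transversality at $A_\lambda$ gives $H_y^+(0,a(h,\lambda))\neq 0$, so $y\mapsto H^+(0,y)$ is a local diffeomorphism near $y=a(h,\lambda)$. Hence, in a neighborhood of $L_\lambda(h_0)$, the condition $F(h,\lambda,\epsilon)=0$ is equivalent to $H^+(B_\lambda^\epsilon)=H^+(A_\lambda)$, which in turn is equivalent to $B_\lambda^\epsilon=A_\lambda$, i.e.\ to the orbit through $A_\lambda$ closing up into a periodic solution of \eqref{eqx}. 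Isolated zeros of $F$ in $h$ correspond to isolated closed orbits and hence to limit cycles.

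The delicate point is the well-definedness of the two half-maps uniformly over $(h,\lambda,\epsilon)$ near $(h_0,0,0)$: one needs the perturbed orbit to spend only a bounded time in $G^\pm$ and to cross $\Sigma=\{x=0\}$ transversally on the opposite side before one can reapply the implicit function theorem. This is handled by compactness of $L_\lambda(h_0)$, smooth dependence of flows on all parameters, and the transversality $H_y^\pm\neq 0$ at both $A_\lambda$ and $B_\lambda$ along the unperturbed orbit, all of which follow from assumptions (a)--(b). Once these uniform estimates are in place, the remainder of the argument is routine bookkeeping with the implicit function theorem and Hadamard's lemma.
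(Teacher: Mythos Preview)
Your argument is sound and is essentially the standard way this lemma is established: construct the two half-return maps by smooth dependence of flows and transversal intersection with $\Sigma=\{x=0\}$, factor out $\epsilon$ from the displacement via Hadamard's lemma, and then use that $y\mapsto H^+(0,y,\lambda)$ is a local diffeomorphism near $a(h,\lambda)$ (because $H_y^+(A_\lambda,\lambda)\neq 0$) to translate zeros of $F$ into fixed points of the full return map, hence into closed orbits. The uniformity issues you flag are exactly the ones that need checking, and they do follow from assumptions (a)--(b) and compactness as you say.

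Note, however, that the present paper does not give its own proof of this lemma: it is quoted verbatim from \cite{LH} (Liu \& Han, 2010) and only used as a black box here. So there is no ``paper's proof'' to compare against in this text; your sketch is in line with the argument in the original reference. One small point worth making explicit in your write-up: the equivalence $F=0\Longleftrightarrow B_\lambda^\epsilon=A_\lambda$ is meant for $\epsilon\neq 0$ (for $\epsilon=0$ every orbit closes while $F(h,\lambda,0)=M(h,\lambda)$ need not vanish), which is the regime in which the lemma is applied.
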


\begin{lemma}[Han \& Sheng, 2015]\label{lemhan} 
\begin{romanlist}
\item If $M(h,\lambda)$ has $k$ zeros in $h$ on the interval $I_{\lambda}$ with each having an odd multiplicity, then \eqref{eqx} has at least $k$ limit cycles bifurcating from the period annulus for $\varepsilon$ small;
\item If the function $M(h,\lambda)$ has at most $k$ zeros in h on the interval $I_{\lambda}$, taking into multiplicities account, then there exist at most $k$ limit cycles of \eqref{eqx} bifurcating from the period annulus.
\end{romanlist}
\end{lemma}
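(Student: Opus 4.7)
The plan is to reduce the counting of limit cycles to the counting of zeros of the bifurcation function $F(h,\lambda,\epsilon)$ via Lemma \ref{lemlh}, and then to compare the zeros of $F(\cdot,\lambda,\epsilon)$ for small $\epsilon$ with those of the Melnikov function $M(h,\lambda)=F(h,\lambda,0)$. Since the $C^\infty$-regularity of $F$ in Lemma \ref{lemlh} supplies the uniform convergence of $F$ and of all its $h$-derivatives to those of $M$ on compact subintervals as $\epsilon\to0$, I would fix $\lambda$ small and work on a compact subinterval $J\subset I_\lambda$ chosen to contain the zeros of interest.

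For (i), I would let $h_1<\cdots<h_k$ be zeros of $M(\cdot,\lambda)$ of odd multiplicity. Because an odd-multiplicity zero is a sign-change point, there exist disjoint closed intervals $[h_j-\delta,h_j+\delta]\subset J$ at whose endpoints $M(\cdot,\lambda)$ takes opposite, nonzero values. Uniform convergence $F(\cdot,\lambda,\epsilon)\to M(\cdot,\lambda)$ preserves these signs for $\epsilon$ sufficiently small, so the intermediate value theorem furnishes a zero of $F(\cdot,\lambda,\epsilon)$ in each interval; shrinking $\delta$ further if necessary, these zeros are isolated. Lemma \ref{lemlh} then promotes each isolated zero to a limit cycle, yielding at least $k$ distinct limit cycles.

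For (ii), I would argue by contradiction: assume there is a sequence $\epsilon_n\downarrow 0$ for which $F(\cdot,\lambda,\epsilon_n)=0$ has at least $k+1$ distinct zeros $h_1^{(n)}<\cdots<h_{k+1}^{(n)}$ in a fixed compact $J\subset I_\lambda$. After extracting a subsequence, each $h_j^{(n)}$ converges to some $h_j^*\in J$, and continuity forces $M(h_j^*,\lambda)=0$. If the $h_j^*$ are all distinct we already obtain $k+1$ zeros of $M$ in $J$, contradicting the hypothesis. Otherwise, whenever $\ell$ of the $h_j^{(n)}$ cluster at a common limit $h^*$, iterated Rolle's theorem applied to the smooth function $F(\cdot,\lambda,\epsilon_n)$ produces $\ell-i$ zeros of $\partial_h^i F(\cdot,\lambda,\epsilon_n)$ in the cluster, all converging to $h^*$. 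Letting $n\to\infty$, the uniform $C^\infty$-convergence gives $\partial_h^i M(h^*,\lambda)=0$ for $i=0,1,\ldots,\ell-1$, so $h^*$ is a zero of $M$ of multiplicity at least $\ell$. Summing over clusters yields at least $k+1$ zeros of $M$ counted with multiplicity, a contradiction.

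The main obstacle lies in the multiplicity bookkeeping in part (ii): to make the coalescence argument faithfully reproduce algebraic multiplicities, the smoothness of $F$ in $h$ must pass to the limit with simultaneous control of all derivatives, and the clusters must be disentangled uniformly in $n$ so that each contributes its full multiplicity to $M$ at a single limit point. This is precisely what the joint $C^\infty$-regularity from Lemma \ref{lemlh} provides; once it is in place, both halves reduce to standard intermediate-value and iterated-Rolle arguments on $J$.
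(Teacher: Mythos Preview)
The paper does not prove Lemma~\ref{lemhan}; it is quoted verbatim from \cite{MS} and used as a black box, so there is no in-paper argument to compare against. Your sketch is the standard route one would take to establish such a result from Lemma~\ref{lemlh}: sign-change persistence under $C^0$-convergence for part (i), and an iterated Rolle/cluster argument with $C^\infty$-convergence for part (ii). One point to tighten in part (i): a sign change of $F(\cdot,\lambda,\epsilon)$ on $[h_j-\delta,h_j+\delta]$ guarantees a zero there, but not automatically an \emph{isolated} one, and Lemma~\ref{lemlh} requires isolation to conclude the periodic orbit is a limit cycle. The clean fix is to observe that if $F(\cdot,\lambda,\epsilon)$ had infinitely many zeros in the compact $J$, the cluster-and-Rolle argument of part (ii) would force $M(\cdot,\lambda)$ to have a zero of infinite multiplicity, which one may exclude by hypothesis (or by working in an analytic category). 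With that caveat addressed, your outline is correct and matches the argument one finds in the cited source.
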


\citet{LH} aslo obtained the formula of $M(h,\lambda)$, which is simplified in \cite{LiangH} as follows
\begin{equation*}
M(h,\lambda)=\int\limits_{\widehat{A_\lambda B_\lambda}}q^+dx-p^+dy+\frac{H_y^+(A_\lambda,\lambda)}{H_y^-(A_\lambda,\lambda)}\int\limits_{\widehat{B_\lambda A_\lambda}}q^-dx-p^-dy.
\end{equation*}
Further, \citet{X} proved the following
\begin{lemma}[Xiong, 2015]\label{lemx} For $0<\lambda \ll 1$, we have
\begin{equation*}
M(h,\lambda)=M_0(h)+\lambda M_1(h)+O(\lambda^2),
\end{equation*}
where
\begin{align*}\label{eqm}
M_0(h)=&\int_{\widehat{AB}}q^+dx-p^+dy\big|_{\lambda=0}+\frac{H_y^+(A,0)}{H_y^-(A,0)}\int_{\widehat{BA}}q^-dx-p^-dy\big|_{\lambda=0}, \nonumber\\
M_1(h)=&-\int_{\widehat{AB}}H_1^+(p^+_{0x}+q^+_{0y})dt+\int_{\widehat{AB}}q_1^+dx-p_1^+dy+\mathcal{L}(p_0^+)\big|_{\lambda=0} \nonumber \\
&+\frac{H_{0y}^+(A)}{H_{0y}^-(A)}\bigg[-\int_{\widehat{BA}}H_1^-(p^-_{0x}+q^-_{0y})dt+\int_{\widehat{BA}}q_1^-dx-p_1^-dy-\mathcal{L}(p_0^-)\big|_{\lambda=0}\bigg]\nonumber\\
&+\Bigg[\frac{H_{y}^+(A_\lambda,\lambda)}{H_{y}^-(A_\lambda,\lambda)}\Bigg]_\lambda\Bigg|_{\lambda=0}\int_{\widehat{AB}}q_0^-dx-p_0^-dy,
\end{align*}
where for a $C^{\infty}$ function $r(x,y)$
\begin{equation}\label{eqL}
\mathcal{L}(r)=r(0,a(h,\lambda))\frac{\partial a}{\partial \lambda}-r(0,b(h,\lambda))\frac{\partial b}{\partial \lambda}.
\end{equation}
\end{lemma}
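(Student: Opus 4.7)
The plan is to Taylor expand the Liang--Han formula
\begin{equation*}
M(h,\lambda)=\int_{\widehat{A_\lambda B_\lambda}}q^+ dx-p^+ dy+\frac{H_y^+(A_\lambda,\lambda)}{H_y^-(A_\lambda,\lambda)}\int_{\widehat{B_\lambda A_\lambda}}q^- dx-p^- dy
\end{equation*}
in powers of $\lambda$. The zeroth order is immediate: setting $\lambda=0$ collapses the paths to the unperturbed arcs $\widehat{AB}$ and $\widehat{BA}$ of $H_0^\pm=h$, reduces the integrands to $q_0^\pm, p_0^\pm$, and the prefactor to $H_{0y}^+(A)/H_{0y}^-(A)$, reproducing the stated formula for $M_0(h)$.

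For $M_1(h)=\partial_\lambda M|_{\lambda=0}$, I would split each line integral into three contributions coming from the variation of the integrand, the path, and the endpoints. The integrand piece supplies $\int_{\widehat{AB}} q_1^+ dx - p_1^+ dy$, and similarly for the $-$ side. The endpoint piece uses $A_\lambda=(0,a(h,\lambda))$, $B_\lambda=(0,b(h,\lambda))$; since $dx=0$ along the $y$-axis, only the $-p^+ dy$ portion survives, producing exactly $\mathcal{L}(p_0^+)|_{\lambda=0}$ as defined in \eqref{eqL}. The opposite orientation of $\widehat{BA}$ flips the sign, turning the $I^-$ endpoint contribution into $-\mathcal{L}(p_0^-)|_{\lambda=0}$, which explains the minus sign inside the bracket. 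The path piece is the delicate one: introduce a deformation vector field $V$ on $\widehat{AB}$ carrying $L_0(h)$ onto $L_\lambda(h)$. The level-set constraint $H^+(\cdot,\lambda)=h$ forces $\nabla H_0^+\cdot V=-H_1^+$. Parametrizing the unperturbed orbit by Hamiltonian time so that $dx=H_{0y}^+ dt$ and $dy=-H_{0x}^+ dt$, one obtains the key identity
\begin{equation*}
V^x dy - V^y dx = -(V\cdot\nabla H_0^+) dt = H_1^+ dt.
\end{equation*}
Combined with $d(q_0^+ dx - p_0^+ dy)=-(p_{0x}^+ + q_{0y}^+) dx\wedge dy$, the standard interior variation $\int_{\gamma_0} i_V d\omega$ evaluates to $-\int_{\widehat{AB}} H_1^+(p_{0x}^+ + q_{0y}^+) dt$, the missing term in the first bracket of $M_1$. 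The analog on $I^-$ produces the corresponding term in the second bracket, and differentiating the prefactor by the product rule gives the closing summand of $M_1(h)$.

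The main technical obstacle is the path-deformation step. Two points require care: (i) the vector field $V$ is fixed by $\nabla H_0^+\cdot V=-H_1^+$ only up to its tangential component along $L_0(h)$, so one must confirm this gauge freedom does not alter the final formula; and (ii) differentiation under the integral sign must be justified despite the domain $\widehat{A_\lambda B_\lambda}$ itself moving in $\lambda$. For (i), a tangential variation $\tau T$ with $\tau$ vanishing at the endpoints yields $i_{\tau T} d\omega$ proportional to $T\cdot\nabla H_0^+=0$ along the orbit, so it contributes nothing. For (ii), the smoothness of $L_\lambda(h)$ in $\lambda$ follows from the implicit function theorem applied to $H^+(x,y,\lambda)=h$ at regular points of $H_0^+$, and such regularity is furnished along $L_0(h)$ by assumption (a).
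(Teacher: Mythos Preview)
The paper does not supply its own proof of this lemma: it is quoted as a result of \cite{X} (Xiong, 2015) and is used as a black box in Sections~3 and~4. Consequently there is no ``paper's proof'' to compare against.

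That said, your argument is sound and is essentially the derivation one finds in Xiong's paper. The three-way split of $\partial_\lambda$ applied to each half of the Liang--Han formula---integrand variation, endpoint variation, and path deformation---is exactly the right decomposition, and your computation of the path-deformation term via $i_V d\omega$ together with the level-set constraint $\nabla H_0^\pm\cdot V=-H_1^\pm$ is correct. The key identity $V^x\,dy-V^y\,dx=H_1^+\,dt$ and the exterior derivative $d(q_0^+\,dx-p_0^+\,dy)=-(p_{0x}^++q_{0y}^+)\,dx\wedge dy$ indeed combine to give $-\int_{\widehat{AB}}H_1^+(p_{0x}^++q_{0y}^+)\,dt$. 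Your handling of the tangential gauge freedom is also correct: along $L_0(h)$ one has $T^x\,dy-T^y\,dx=0$, so the ambiguity in $V$ contributes nothing to $i_V d\omega$, and with $\tau$ vanishing at the endpoints the boundary term $i_{\tau T}\omega$ vanishes as well.

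Two small points worth tightening. First, the endpoint and interior contributions are not really separate mechanisms: both arise from a single application of the transport formula $\frac{d}{d\lambda}\int_{\gamma_\lambda}\omega=\int_{\gamma_0}i_V d\omega+[i_V\omega]_{\partial\gamma_0}$, and presenting them that way makes the signs (in particular the $-\mathcal{L}(p_0^-)$ on the left arc, coming from the reversed orientation of $\widehat{BA}$) transparent. Second, for the left arc $\widehat{B_\lambda A_\lambda}$ the endpoints $A_\lambda,B_\lambda$ are determined by the \emph{right} Hamiltonian $H^+$ (that is how the bifurcation function $F$ is set up), whereas the level-set constraint along the arc involves $H^-$; you should check that the variation vector field on the left can be chosen to match $(0,\partial_\lambda a)$ and $(0,\partial_\lambda b)$ at the endpoints while still satisfying $\nabla H_0^-\cdot V=-H_1^-$ in the interior. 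This holds under the standing assumption that $L_\lambda(h)$ is a genuine periodic orbit of the piecewise system (so the two arcs share endpoints), but it deserves a sentence.
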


\section{Proof of Theorem \ref{thi}}
In this section we present a proof to Theorem \ref{thi}. Consider the following piecewise system with multiple parameters
\begin{equation}\label{0}
\left\{\begin{aligned}
&\dot{x}=y, \\
&\dot{y}=-x-\lambda \mbox{sgn}(y) g(x)-\epsilon \bigg[ y \Big(f_0(x)+\lambda f_1(x) \Big)+\mbox{sgn}(y)\Big(g_0(x)+\lambda g_1(x)\Big) \bigg],
\end{aligned}\right.
\end{equation}
where
\begin{align}\label{1-fg}
&f_i(x)=\sum_{j=0}^{m} a_j^{(i)} x^j,~~~~~g_i(x)=\sum_{j=0}^{n} b_j^{(i)} x^j,~~~~~~~i=0,1,\nonumber\\
&g(x)=\sum_{j=0}^{n} c_j x^j.
\end{align}

In order to apply Lemma \ref{lemx}, we make a change of variables of the form $x=\tilde{y},~y=\tilde{x}$ and time rescaling $\tau=-t$. For the sake of convenience, we still use $x$ and $y$ to replace $\tilde{x},~\tilde{y}$ respectively. Then system \eqref{0} can be transformed into
\begin{equation}\label{1}
\left\{\begin{aligned}
&\dot{x}=y+\lambda \mbox{sgn}(x) g(y)+\epsilon\bigg[x\Big(f_0(y)+\lambda f_1(y)\Big)+\mbox{sgn}(x) \Big(g_0(y)+\lambda g_1(y) \Big)\bigg], \\
&\dot{y}=-x.
\end{aligned}\right.
\end{equation}

Then by Lemma \ref{lemx}, the first order Melnikov function of \eqref{1} can be written as
\begin{equation*}
M(h,\lambda)=M_0(h)+\lambda M_1(h)+O(\lambda^2),
\end{equation*}
where
\begin{align}\label{1-m}
&M_0(h)=I_0(h),\nonumber\\
&M_1(h)=I_1(h)+I_2(h)+I_3(h)+I_4(h),
\end{align}
where
\begin{align}\label{1-I}
&I_i(h)=\int_{\widehat{AB}}-\big(xf_i(y)+g_i(y)\big)dy+\int_{\widehat{BA}}-\big(xf_i(y)-g_i(y)\big)dy,~i=0,1, \nonumber\\
&I_2(h)=-\int_{\widehat{AB}}H^+_1f_0(y)d\tau+\int_{\widehat{BA}}H^+_1f_0(y)d\tau,~\mbox{where}~ H^+_1=\int_{0}^{y}g(y)dy,\nonumber\\
&I_3(h)=\mathcal{L} \big(xf_0(y)+g_0(y)\big) |_{\lambda=0}-\mathcal{L}\big(xf_0(y)-g_0(y)\big)|_{\lambda=0}, \nonumber\\
&I_4(h)=\Bigg[ \frac{a(h,\lambda)+\lambda g\big(a(h,\lambda)\big)}{a(h,\lambda)-\lambda g\big(a(h,\lambda)\big)}\Bigg]_\lambda \Bigg|_{\lambda=0} \int_{\widehat{AB}}-\big(xf_0(y)-g_0(y)\big)dy,\nonumber\\
&A=(0,\sqrt{2h}),~~ B=(0,-\sqrt{2h}).
\end{align}
One can  easily find that \eqref{1}$_{\epsilon=0}$ has a first integral
$$H(x,y,\lambda)=\frac{1}{2}(x^2+y^2)+\lambda\mbox{sgn}(x)\int_0^yg(y)dy$$
and \eqref{1}$_{\epsilon=0,\lambda=0}$ has a family of periodic orbits $L_0(h):~x^2+y^2=2h$, see Fig \ref{fig2}.
\begin{figure}[h!]
\centering
\includegraphics[width=5cm]{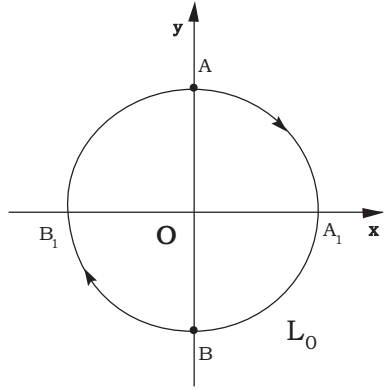}
\caption {The phase portrait of $L_0(h).$}\label{fig2}
\end{figure}
Then, we have a lemma below:

\begin{lemma}\label{lem1-M0}
The function $M_0(h)$ has at most $[\frac{m}{2}]+[\frac{n}{2}]+1$ zeros if it is not zero identically, and the number $[\frac{m}{2}]+[\frac{n}{2}]+1$ can be reached for some $f_0$ and $g_0$. Further, if the polynomials $f_0$ and $g_0$ are odd, then $M_0(h)\equiv0.$
\end{lemma}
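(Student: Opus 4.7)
The plan is to exploit the simple geometry of the unperturbed periodic orbits (circles $x^{2}+y^{2}=2h$) to write $M_0(h)=I_0(h)$ as an explicit linear combination of monomials in $r:=\sqrt{2h}$, and then to bound its zero set by a Descartes/ECT argument.

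First I would parametrize the arcs: on $\widehat{AB}$ (the right semicircle) $x=+\sqrt{2h-y^{2}}$ with $y$ decreasing from $\sqrt{2h}$ to $-\sqrt{2h}$, while on $\widehat{BA}$ (the left semicircle) $x=-\sqrt{2h-y^{2}}$ with $y$ increasing. Substituting into the formula for $I_0$ in \eqref{1-I} and reversing the orientations, the two arcs contribute equally to each piece and the result collapses to
\[ I_0(h)=2\int_{-r}^{r}\sqrt{r^{2}-y^{2}}\,f_0(y)\,dy+2\int_{-r}^{r}g_0(y)\,dy,\qquad r=\sqrt{2h}. \]

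Next I would expand $f_0$ and $g_0$ term by term. All odd powers of $y$ drop out by symmetry. For even $j=2k$, standard computations give $\int_{-r}^{r}\sqrt{r^{2}-y^{2}}\,y^{2k}\,dy=\alpha_k r^{2k+2}$ with $\alpha_k>0$ (a beta integral) and $\int_{-r}^{r}y^{2k}\,dy=\frac{2}{2k+1}r^{2k+1}$. Thus the surviving $f_0$-coefficients $a_{2k}^{(0)}$, $0\le k\le[m/2]$, contribute the $[m/2]+1$ even-degree monomials $r^{2},r^{4},\ldots,r^{2[m/2]+2}$, while the $g_0$-coefficients $b_{2k}^{(0)}$, $0\le k\le[n/2]$, contribute the $[n/2]+1$ odd-degree monomials $r,r^{3},\ldots,r^{2[n/2]+1}$. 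The parities are disjoint, the coefficients are free and independent, and every surviving monomial has degree at least $1$.

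Consequently, as a function of $r\in(0,\infty)$, $M_0$ is $r$ times a polynomial with at most $[m/2]+[n/2]+2$ nonzero terms; by Descartes' rule such a polynomial has at most $[m/2]+[n/2]+1$ positive zeros, and the bijection $h\leftrightarrow r=\sqrt{2h}$ transports this bound to $h\in(0,\infty)$. Sharpness follows because the chosen set of distinct monomials forms an ECT-system on $(0,\infty)$, so the coefficients $a_{2k}^{(0)},b_{2k}^{(0)}$ can be tuned (for example, by prescribing simple zeros one at a time via a sign-change construction) to realize exactly $[m/2]+[n/2]+1$ zeros. Finally, if $f_0$ and $g_0$ are odd polynomials then $a_{2k}^{(0)}=b_{2k}^{(0)}=0$ for every $k$, so both integrals above vanish identically and $M_0\equiv 0$. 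The main obstacle is really just the parity bookkeeping for the two simultaneous families of monomials; the Descartes/ECT step is standard, though the realizability half of the claim warrants a short separate construction rather than a bare citation.
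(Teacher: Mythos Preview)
Your proposal is correct and follows essentially the same approach as the paper: both compute $M_0(h)=I_0(h)$ explicitly on the circle $x^2+y^2=2h$, observe by parity that only the even coefficients $a_{2k}^{(0)}$ and $b_{2k}^{(0)}$ survive, and obtain $M_0$ as a linear combination of the $[\frac{m}{2}]+[\frac{n}{2}]+2$ disjoint monomials $h^{j+1}$ and $h^{j+\frac12}$, from which the zero bound and its sharpness follow by independence of these monomials (your Descartes/ECT phrasing is just a slightly more explicit version of the paper's ``free parameters'' sentence). The only cosmetic difference is that the paper evaluates the $f_0$-integral via Green's formula and polar coordinates ($-\oint_{L_0} x y^j\,dy=\iint_{\mathrm{Int}\,L_0} y^j\,dx\,dy$), while you parametrize the semicircles directly and invoke a beta integral; the two computations are equivalent.
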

\begin{proof}
By \eqref{1-fg}, \eqref{1-I} and Green formula, for $i=0,1,$ we have
\begin{align}\label{1-Ii}
I_i(h)&=\int_{\widehat{AB}}-\big(xf_i(y)+g_i(y)\big)dy+\int_{\widehat{BA}}-\big(xf_i(y)-g_i(y)\big)dy, \nonumber\\
&=-\oint_{L_0}xf_i(y)dy-2\int_{\widehat{AB}}g_i(y)dy \nonumber\\
&=-\sum_{j=0}^{m}a^{(i)}_j \oint_{L_0}y^j xdy+2 \sum_{j=0}^{n} b^{(i)}_j \int_{-\sqrt{2h}}^{\sqrt{2h}}y^jdy\nonumber\\
&=\sum_{j=0}^{m}a^{(i)}_j J_{1j}+\sum_{i=0}^{n}b^{(i)}_j J_{2j},
\end{align}
where
\begin{align*}
&J_{1j}=-\oint_{L_0}y^j xdy,&&j=0,\cdots,m,\\
&J_{2j}=2 \int_{-\sqrt{2h}}^{\sqrt{2h}}y^jdy,&&j=0,\cdots,n.
\end{align*}
By Green formula and using polar coordinate transformation $x=r\cos \theta,~y=r \sin \theta$, for $j=0,\ldots, m$, $J_{1j}$ has the form as follows
\begin{equation}\label{1-J1j}
J_{1j}=\iint\limits_{Int L_0}y^j dxdy=\int_{0}^{2 \pi}\int_{0}^{\sqrt{2h}} r^j (\sin\theta)^j r dr d\theta= \frac{(\sqrt{2h})^{j+2}\int_{0}^{2 \pi} (\sin\theta)^j d\theta}{j+2}.
\end{equation}
Note that
\begin{equation}\label{1-sin}
\int_{0}^{2 \pi} (\sin\theta)^j d\theta=\left\{
\begin{aligned}
&0,&& j \mbox{~ odd,}\\
&2 \pi \prod\limits_{l=1}^{j/2}\frac{2l-1}{2l},
&& j \mbox{~ even,}
\end{aligned}
\right.
\end{equation}
and that
\begin{equation}\label{1-J2j}
J_{2j}=\frac{2y^{j+1}}{j+1}\bigg|_{-\sqrt{2h}}^{\sqrt{2h}}=\left\{
\begin{aligned}
&0,&&j \mbox{~odd},\\
&\frac{4(\sqrt{2h})^{j+1}}{j+1},&&j \mbox{~even}.
\end{aligned}
\right.
\end{equation}

Taking \eqref{1-sin} into \eqref{1-J1j}, then \eqref{1-J1j}, \eqref{1-J2j} into \eqref{1-Ii}, we can obtain
\begin{equation}\label{1-final-Ii}
I_i(h)=\sum\limits_{j=0}^{[\frac{m}{2}]}\tilde{a}^{(i)}_{j}h ^{j+1}+
\sum\limits_{j=0}^{[\frac{n}{2}]}\tilde{b}^{(i)}_{j}h^{j+\frac{1}{2}},~~i=0,1,
\end{equation}
where
\begin{align}\label{1-tildeab}
&\tilde{a}^{(i)}_{j}=\bigg(\frac{2 \pi}{j+1} \prod\limits_{l=1}^{j}\frac{2l-1}{l}\bigg)a^{(i)}_{2j},~\tilde{a}^{(i)}_0=2\pi a^{(i)}_0,&&j=1,2,\cdots,[\frac{m}{2}], \nonumber\\
&\tilde{b}^{(i)}_{j}=\bigg(\frac{2^{j+\frac{5}{2}}}{2j+1}\bigg)b^{(i)}_{2j},&&j=0,1,\cdots,[\frac{n}{2}].
\end{align}

Specially, when $i=0,$ we have $M_0(h)=I_0(h)$.
Since $\tilde{a}^{(0)}_{j},~\tilde{b}^{(0)}_{j}$ are independent, by taking them as free parameters, we can conclude that $M_0(h)$ has at most $[\frac{m}{2}]+[\frac{n}{2}]+1$ isolated positive zeros for $h>0$ and this number can be reached. Furthermore, the second conclusion of this lemma is obviously obtained from \eqref{1-final-Ii} and \eqref{1-tildeab}.
\end{proof}

\begin{remark}
Resently, \citet{MM} consider the system
\begin{equation}\label{eqMM}
\left\{\begin{aligned}
&\dot{x}=y,\\
&\dot{y}=-x-\epsilon\Big(f(x)y+\mbox{sgn}(y)(k_1x+k_2)\Big),
\end{aligned}\right.
\end{equation}
where $f(x)=\sum_{i=0}^{m}a_ix^i$.

They proved that the above system has at most $[m/2]+1$ limit cycles by using the developed first-order average theory in \cite{bj2004}.
If we take $g(x)=k_1x+k_2$ and $n=1$, then by Lemmas \ref{lemhan}, \ref{lem1-M0} and Eq. \eqref{1-tildeab}, one can  get their result immediately, while the Melnikov function of system \eqref{eqMM} is
\begin{equation*}
M(h)=B_0h^\frac{1}{2}+B_1h+B_2h^2+ \cdots + B_{[\frac{m}{2}]+1}h^{[\frac{m}{2}]+1},
\end{equation*}
where
\begin{equation*}
B_0=4\sqrt{2}k_2,~B_1=2\pi a_0,~B_i=\Big(\frac{2 \pi}{i} \prod\limits_{l=1}^{i-1}\frac{2l-1}{l}\Big)a_{2i-2}, ~~i=2,\cdots, [\frac{m}{2}]+1.
\end{equation*}
\end{remark}

\begin{lemma}\label{lem1-M1}
Suppose that $f_0,~g_0$ are odd functions with degree at most $m,~n$ respectively. Then the function $M_1(h)$ has at most $[\frac{m}{2}]+2[\frac{n}{2}]+1$ zeros and the number can be reached.
\end{lemma}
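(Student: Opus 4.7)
Since $f_0$ and $g_0$ are odd, Lemma \ref{lem1-M0} gives $M_0\equiv 0$, so by Lemma \ref{lemhan} the task is to count and then realize positive zeros of $M_1(h)=I_1+I_2+I_3+I_4$. My plan is to evaluate each $I_i$ on the unperturbed circle using the parameterization $(x,y)=(\sqrt{2h}\sin\theta,\sqrt{2h}\cos\theta)$ with $d\tau=d\theta$, where $\widehat{AB}$ corresponds to $\theta\in[0,\pi]$ and $\widehat{BA}$ to $\theta\in[\pi,2\pi]$; then view $M_1$ as a polynomial in $u:=\sqrt h$ and apply a Descartes/fewnomial bound.

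The piece $I_1$ is structurally identical to $I_0$ of Lemma \ref{lem1-M0} but with coefficients inherited from the free polynomials $f_1,g_1$; formula \eqref{1-final-Ii} with $i=1$ therefore contributes the independent monomials $h,\dots,h^{[m/2]+1}$ and $h^{1/2},\dots,h^{[n/2]+1/2}$. The piece $I_2$ depends only on $y=\sqrt{2h}\cos\theta$, so the substitution $\theta\mapsto 2\pi-\theta$ exchanges the two arcs and equates the two integrals, forcing $I_2=0$. In $I_4$, the reflection $\theta\mapsto\pi-\theta$ within $\widehat{AB}$ reduces the relevant integrands to $\int_0^\pi\sin^2\theta\cos^{2k+1}\theta\,d\theta$ and $\int_0^\pi\sin\theta\cos^{2k+1}\theta\,d\theta$, both of which vanish by the oddness of $f_0,g_0$; hence $I_4=0$ and $M_1=I_1+I_3$.

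For $I_3$, implicit differentiation of the defining relation $h=\tfrac12\,a(h,\lambda)^2+\lambda G(a(h,\lambda))$ with $G(y)=\int_0^y g(s)\,ds$ gives $\partial_\lambda a|_{\lambda=0}=-G(\sqrt{2h})/\sqrt{2h}$ and, analogously, $\partial_\lambda b|_{\lambda=0}=G(-\sqrt{2h})/\sqrt{2h}$. Substituting these into $\mathcal{L}$ and using $g_0(-y)=-g_0(y)$ collapses $I_3$ to
\[
I_3(h)=\frac{2\,g_0(\sqrt{2h})}{\sqrt{2h}}\bigl[G(-\sqrt{2h})-G(\sqrt{2h})\bigr],
\]
which, after expansion, is a linear combination of $h^{1/2},\dots,h^{n-1/2}$ whose coefficients are bilinear in the odd coefficients $b_{2k+1}^{(0)}$ of $g_0$ and the even coefficients $c_{2l}$ of $g$.

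Collecting $M_1=I_1+I_3$ as a polynomial in $u$, the support consists of the even powers $u^2,\dots,u^{2[m/2]+2}$ (from $I_1$) together with the odd powers obtained as the union of those from $I_1$ (up to $u^{2[n/2]+1}$) and from $I_3$ (up to $u^{2n-1}$); a standard fewnomial bound (a real combination of $N$ distinct positive monomials has at most $N-1$ positive zeros) then yields the upper bound $[m/2]+2[n/2]+1$. To realize this bound, I will first prescribe the low-degree coefficients directly through the free parameters $\tilde a_j^{(1)},\tilde b_j^{(1)}$ coming from $f_1,g_1$, and then populate the higher-degree odd coefficients of $I_3$ triangularly via the bilinear products $b_{2k+1}^{(0)}c_{2l}$, starting from the top-degree monomial whose coefficient is a single leading product. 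The main obstacle will be this last triangular-solvability step: the bilinear (rather than linear) dependence of $I_3$'s coefficients on $(g_0,g)$ must be shown to permit independent prescription of all $[m/2]+2[n/2]+2$ distinct coefficients of $M_1$.
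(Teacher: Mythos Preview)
Your plan is essentially the paper's proof: the paper shows $I_2=I_4=0$ by the same symmetry/oddness observations, computes $\partial_\lambda a$ and $\partial_\lambda b$ by implicit differentiation exactly as you do, obtains $I_3$ as a bilinear combination of the odd coefficients of $g_0$ and the even coefficients of $g$, and arrives at $M_1(h)=\sum_{i=0}^{[m/2]}\tilde a_i^{(1)}h^{i+1}+\sum_{l=0}^{2[n/2]}\tilde b_l\,h^{l+1/2}$ before counting monomials. The obstacle you flag is exactly what the paper handles (somewhat tersely): since the $\tilde b_l^{(1)}$ coming from $g_1$ freely adjust the coefficients with $l\le[n/2]$, one only needs the bilinear products $b_i^*c_j^*$ to prescribe the $[n/2]$ higher coefficients, which is done triangularly by fixing the top $b^*$ and solving for the $c^*$.
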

\begin{proof}
Noting that along $L_0(h)$, we have $\frac{dy}{d\tau}=-x$. It follows from \eqref{1-I} that
\begin{align}\label{1-barI}
I_2(h)&=-\int_{\widehat{AB}}H^+_1 f_0(y)\frac{dy}{(-x)}+\int_{\widehat{BA}}H^+_1 f_0(y)\frac{dy}{(-x)}\nonumber\\
&=-\int_{\sqrt{2h}}^{-\sqrt{2h}}\frac{ H^+_1f_0(y)dy}{(-\sqrt{2h-y^2})}+\int_{-\sqrt{2h}}^{\sqrt{2h}} \frac{H^+_1 f_0(y)dy}{(\sqrt{2h-y^2})}=0,
\end{align}
where $H^+_1=\int_{0}^{y}g(y)dy.$

Since $f_0,~g_0$ are odd, we derive that
\begin{equation*}
\int_{\widehat{AB}}-\big(xf_0(y)-g_0(y)\big)dy=\int_{\sqrt{2h}}^{-\sqrt{2h}}-\big(\sqrt{2h-y^2}f_0(y)-g_0(y)\big)dy=0.
\end{equation*}
Thus from \eqref{1-I}, we have
\begin{equation}\label{1-hatI}
I_4(h)=0.
\end{equation}

Let us recall that $a(h,\lambda)$ and $b(h,\lambda)$ are two solutions of the equation
\begin{equation*}
H^+(0,y,\lambda)\equiv\frac{1}{2}y^2+\lambda \int_0^yg(y)dy=h.
\end{equation*}
Differentiating the above equation with respect to $\lambda$ from both sides, one has
\begin{equation*}
y\frac{\partial y}{\partial \lambda}+\int_0^yg(y)dy+\lambda g(y)\frac{\partial y}{\partial \lambda}=0,
\end{equation*}
which yields that
$$\frac{\partial y}{\partial \lambda}=\frac{-\int_0^yg(y)dy}{y+\lambda g(y)}.$$
 Then we have
\begin{align}\label{1-parab}
&\frac{\partial a}{\partial \lambda}\Big|_{\lambda=0}=\frac{\partial y}{\partial \lambda}\Big|_{ \lambda=0\atop y=\sqrt{2h} }=\frac{-\int_0^{\sqrt{2h}}g(y)dy}{\sqrt{2h}}, \nonumber\\
&\frac{\partial b}{\partial \lambda}\Big|_{\lambda=0}=\frac{\partial y}{\partial \lambda}\Big|_{\lambda=0\atop y=-\sqrt{2h}}=\frac{\int_0^{-\sqrt{2h}}g(y)dy}{\sqrt{2h}}.
\end{align}
Since $f_0,~g_0$ are odd, we can rewrite them as
\begin{equation}\label{1-M1-fg}
f_0(y)=\sum_{j=0}^{[\frac{m}{2}]} a_{2j+1}^{(0)} y^{2j+1},~~~g_0(y)=\sum_{j=0}^{[\frac{n}{2}]} b_{2j+1}^{(0)} y^{2j+1}.
\end{equation}

By \eqref{eqL}, \eqref{1-fg}, \eqref{1-Ii}, \eqref{1-parab} and \eqref{1-M1-fg}, one can get that
\begin{align}\label{1-tildeI}
I_3(h)&=2\Big[g_0(\sqrt{2h})\frac{-\int_0^{\sqrt{2h}}g(y)dy}{\sqrt{2h}}-g_0(-\sqrt{2h})\frac{\int_0^{-\sqrt{2h}}g(y)dy}{\sqrt{2h}}\Big]\nonumber\\
&=\frac{-2g_0(\sqrt{2h})}{\sqrt{2h}}\Big[\int_0^{\sqrt{2h}}\big(g(y)+g(-y)\big)dy\Big]\nonumber\\
&=2\sum_{j=0}^{[\frac{n}{2}]}- b_{2j+1}^{(0)} (\sqrt{2h})^{2j}\int_0^{\sqrt{2h}} 2 \sum_{j=0}^{[\frac{n}{2}]}c_{2j} y^{2j}dy \nonumber\\
&=\sum_{j=0}^{[\frac{n}{2}]} b^*_j h^j \cdot \sum_{j=0}^{[\frac{n}{2}]} c^*_j h^{j+\frac{1}{2}}=\sum_{l=0}^{2[\frac{n}{2}]}\Big( \sum_{ 0\leqslant i,j \leqslant [\frac{n}{2}]}^{i+j=l}  b_i^* c_j^* \Big)h^{l+\frac{1}{2}},
\end{align}
where
\begin{equation*}
b^*_i=-2^{i+1} b_{2i+1}^{(0)},~~~c^*_i= \frac{2^{i+\frac{3}{2}} c_{2i}}{2i+1},~~~i=0,\cdots,[\frac{n}{2}].
\end{equation*}

Hence, combining \eqref{1-m} with \eqref{1-final-Ii}, \eqref{1-barI}, \eqref{1-hatI} and \eqref{1-tildeI}, we have the following
\begin{equation}\label{1-m1}
M_1(h)= \sum\limits_{i=0}^{[\frac{m}{2}]}\tilde{a}^{(1)}_{i}h^{i+1}
+\sum_{l=0}^{2[\frac{n}{2}]} \tilde {b}_l h^{l+\frac{1}{2}},
\end{equation}
where
\begin{equation}\label{1-m1ab}
\tilde {b}_l=\left\{
\begin{aligned}
&\tilde{b}^{(1)}_l+ \sum_{ 0\leqslant i,j \leqslant [\frac{n}{2}]}^{i+j=l}  b_i^* c_j^*,&&l=0,\cdots,[\frac{n}{2}],\\
&\sum_{ 0\leqslant i,j \leqslant [\frac{n}{2}]}^{i+j=l}  b_i^* c_j^*,&&l=[\frac{n}{2}]+1,\cdots,2[\frac{n}{2}],
\end{aligned}\right.
\end{equation}
and $\tilde{a}_i^{(1)},~\tilde{b}_i^{(1)}$ are defined in \eqref{1-tildeab}.

We note that $\tilde{a}_i^{(1)},~\tilde{b}_i^{(1)}$, $b^*_i,~c^*_i$  are independent free parameters with respect to $a_{2i}^{(1)},~b_{2i}^{(1)}$, $b^{(0)}_{2i+1},~c_{2i}$ respectively. So it follows from \eqref{1-tildeab} and the above formula that $\tilde{a}_i^{(1)}$ and $\tilde{b}_i$ are also independent. Then by taking them as free parameters, Lemma \ref{lem1-M1} is proved immediately.
\end{proof}
From Lemma \ref{lem1-M1}, we know that the maximum number of zeros of $M(h,\lambda)$ is $[\frac{m}{2}]+2[\frac{n}{2}]+1$ for $0< \epsilon \ll \lambda \ll 1$ if $M_1(h)\not\equiv 0.$ Thus, we have proved the following theorem.
\begin{theorem}\label{thi-1}
Let $0< \epsilon \ll \lambda \ll 1$ and $M_0(h)\equiv0,~M_1(h)\not\equiv 0.$ Then \eqref{0} has at most $[\frac{m}{2}]+2[\frac{n}{2}]+1$ limit cycles bifurcating from the periodic orbits of the system $\dot{x}=y,~\dot{y}=-x.$ And this number can be reached.
\end{theorem}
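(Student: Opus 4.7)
The plan is to combine the forced oddness of $f_0,g_0$ under $M_0\equiv 0$ with Lemma \ref{lem1-M1}, and then to pass from zeros of $M_1$ to limit cycles of \eqref{0} via Lemma \ref{lemhan}.

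First, I would show that $M_0(h)\equiv 0$ on $I_\lambda$ forces $f_0$ and $g_0$ to be odd polynomials. Specializing \eqref{1-final-Ii}--\eqref{1-tildeab} to $i=0$ gives
\begin{equation*}
M_0(h)=\sum_{j=0}^{[m/2]}\tilde a^{(0)}_j\,h^{j+1}+\sum_{j=0}^{[n/2]}\tilde b^{(0)}_j\,h^{j+1/2},
\end{equation*}
where each $\tilde a^{(0)}_j$ is a nonzero multiple of $a^{(0)}_{2j}$ and each $\tilde b^{(0)}_j$ of $b^{(0)}_{2j}$. Because the exponents $\{j+1\}\cup\{j+\frac{1}{2}\}$ appearing here are pairwise distinct, the corresponding monomials are linearly independent on any positive interval, and the identity $M_0\equiv 0$ forces $a^{(0)}_{2j}=b^{(0)}_{2j}=0$ for every relevant $j$. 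This is precisely the statement that $f_0$ and $g_0$ contain only odd-degree monomials.

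Second, the hypothesis of Lemma \ref{lem1-M1} now holds, so that lemma simultaneously gives the upper bound $[\frac{m}{2}]+2[\frac{n}{2}]+1$ on the number of isolated zeros of $M_1$ and an explicit parameter configuration realizing it. Under $M_0\equiv 0$, the expansion of Lemma \ref{lemx} reduces to $M(h,\lambda)=\lambda M_1(h)+O(\lambda^2)$, so dividing by $\lambda$ produces a function whose zero set converges to that of $M_1$ as $\lambda\to 0$. A standard Rouch\'e or implicit function theorem argument on a closed subinterval of $I_\lambda$ then shows that for $0<\lambda\ll 1$ the function $M(\cdot,\lambda)$ has the same number of zeros, counted with multiplicity, as $M_1$. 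The upper and lower limit cycle counts for \eqref{1} follow from parts (ii) and (i) of Lemma \ref{lemhan} respectively, and since the substitution $(x,y,t)\mapsto(y,x,-t)$ relating \eqref{0} and \eqref{1} is a diffeomorphism with time reversal, limit cycles correspond bijectively, completing both bounds for \eqref{0}.

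The main technical point, which I expect to be the only real obstacle, is ruling out parasitic zeros of $M(h,\lambda)$ away from neighborhoods of the zeros of $M_1$. Since $M_1$ is a finite $\mathbb{R}$-linear combination of powers $h^{i+1}$ and $h^{l+1/2}$, it has only finitely many positive zeros and $|M_1|$ is bounded below by a positive constant on the complement of small neighborhoods of these zeros within any fixed compact subinterval of $(0,\infty)$. The $O(\lambda^2)$ remainder is uniformly controlled there, which excludes extra zeros and makes the transfer of the count from $M_1$ to $M(\cdot,\lambda)$ exact.
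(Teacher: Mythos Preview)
Your proof is correct and follows the same route as the paper: invoke Lemma~\ref{lem1-M1} to count zeros of $M_1$, transfer the count to $M(h,\lambda)$ for $0<\lambda\ll 1$, and then apply Lemma~\ref{lemhan}. The paper's own argument compresses all of this into a single sentence and leaves implicit both the converse of the last claim in Lemma~\ref{lem1-M0} (that $M_0\equiv 0$ forces the even-degree coefficients of $f_0,g_0$ to vanish) and the perturbation step you make explicit.
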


Note that
\begin{align}\label{sgn}
&-\lambda \mbox{sgn}(y) g(x)-\epsilon \bigg[ y \Big(f_0(x)+\lambda f_1(x) \Big)+\mbox{sgn}(y)\Big(g_0(x)+\lambda g_1(x)\Big) \bigg] \nonumber\\
=&-\lambda\Big [y \bar{f}(x)+\mbox{sgn}(y)\bar{g}(x)\Big],
\end{align}
where
\begin{align*}
&\bar{f}(x)=\delta\Big(f_0(x)+\lambda f_1(x)\Big),\\
&\bar{g}(x)=g(x)+\delta\Big(g_0(x)+\lambda g_1(x)\Big),\\
&\delta=\frac{\epsilon}{\lambda}.
\end{align*}
Obviously, Theorem \ref{thi} follows from Theorem \ref{thi-1} directly. This ends the proof of Theorem \ref{thi}.

The following example is an illustration of Theorem \ref{thi}.

\begin{example}\label{ex1}
Take in system \eqref{0} $m=n=3$ and
\begin{align*}
&a_3^{(0)}=1,~a_0^{(1)}=-\frac{25}{\pi}, ~a_2^{(1)}=-\frac{10}{\pi}, \\
&b_3^{(0)}=-\frac{1}{4},~b_0^{(1)}=3\sqrt{2},~b_2^{(1)}=\frac{105\sqrt{2}}{16},\\
& c_2=\frac{3\sqrt{2}}{8},\\
&a^{(0)}_0=a^{(0)}_1=a^{(0)}_2=a^{(1)}_1=a^{(1)}_3=0,\\
&b^{(0)}_0=b^{(0)}_1=b^{(0)}_2=b^{(1)}_1=b^{(1)}_3=0,\\
&c_0=c_1=c_3=0.
\end{align*} 
Then by Lemmas \ref{lem1-M0} and \ref{lem1-M1}, we have
\begin{equation*}
M_1(h)=24h^{\frac{1}{2}}-50h+35h^{\frac{3}{2}}-10h^2+h^{\frac{5}{2}},
\end{equation*}
which has four nonzero simple roots $h=1,~4,~9,~16$. 
Therefore, it follows from \eqref{sgn} that the system below
\begin{equation}\label{ex1-1}
\left\{\begin{aligned}
&\dot{x}=y,\\
&\dot{y}=-x-\lambda\big[y\bar{f}+\mbox{sgn}(y)\bar{g}\big],
\end{aligned}\right.
\end{equation}
where 
\begin{align*}
&\bar{f}=\frac{\epsilon}{\lambda}x^3-\frac{10\epsilon}{\pi}x^2-\frac{25\epsilon}{\pi},\\
&\bar{g}=-\frac{\epsilon}{4\lambda}x^3+\Big(\frac{105\sqrt{2}}{16}\epsilon+\frac{3\sqrt{2}}{8}\Big)x^2+3\sqrt{2}\epsilon,
\end{align*}
has four limit cycles near $x^2+y^2=2,~8,~18,~32$ respectively. 
\end{example}

\begin{remark}
We can make a comparation of the number of limit cycles of \eqref{ex1-1} with a case of piecewise smooth system and a case of smooth system respectively.

First, consider 
\begin{equation}\label{ex1-2}
\left\{\begin{aligned}
&\dot{x}=y,\\
&\dot{y}=-x-\epsilon\big[y(a_3x^3+a_2x^2+a_1x+a_0)+\mbox{sgn}(y)(b_3x^3+b_2x^2+b_1x+b_0)\big].
\end{aligned}\right.
\end{equation}
From the formula of Melnikov function given by Theorem 1.1 in \citep{LH}, we have 
\begin{equation*}
M(h)=h^{\frac{1}{2}}\big(4\sqrt{2}b_0+2 a_0 \pi h^{\frac{1}{2}} +\frac{8\sqrt{2}}{3}b_2 h+a_2 \pi h^{\frac{3}{2}}\big),
\end{equation*}
which has at most three nonzero simple roots. It further indicates that if $M(h)\not\equiv 0$, \eqref{ex1-2} has at most three limit cycles, which is one less than \eqref{ex1-1}. 

Second, for the smooth system as follows
\begin{equation}\label{ex1-3}
\left\{\begin{aligned}
&\dot{x}=y,\\
&\dot{y}=-x-\epsilon \Big[y(a_3x^3+a_2x^2+a_1x+a_0) +k (b_3x^3+b_2x^2+b_1x+b_0)\Big],
\end{aligned}\right.
\end{equation}
with $k \neq 0,$  there exists at most one limit cycle by the first order Melnikov function $M(h)=(-2a_0\pi)h-(a_2\pi)h^2$.

Thus, these examples support the conclusion of Theorem \ref{thi}, which can be used to find more limit cycles.
\end{remark}

\section{Proof of Theorem \ref{thii}}
In this section, we study the following system
\begin{equation}\label{2}
\left \{
\begin{aligned}
&\dot{x}=y, \\
&\dot{y}=-x-\lambda\mbox{sgn}(x)g(x)-\epsilon \bigg[ y \Big(f_0(x)+\lambda f_1(x) \Big)+\mbox{sgn}(x)\Big(g_0(x)+\lambda g_1(x)\Big) \bigg],
\end{aligned}\right.
\end{equation}
where $g,~f_i,~g_i(i=0,1)$ are defined as in \eqref{1-fg}.
By Lemma \ref{lemx}, the first order Melnikov function $M(h,\lambda)$ of \eqref{2} has the following items
\begin{align}\label{2-M}
&M_0(h)=I_0(h),\nonumber\\
&M_1(h)=I_1(h)+I_2(h)+I_3(h),
\end{align}
where
\begin{align}\label{2-I}
&I_i(h)=\int_{\widehat{AB}}-\big(yf_i(x)+g_i(x)\big)dx+\int_{\widehat{BA}}-\big(yf_i(x)-g_i(x)\big)dx,~i=0,1, \nonumber\\
&I_2(h)=\int_{\widehat{AB}}H^+_1f_0(x)dt-\int_{\widehat{BA}}H^+_1f_0(x)dt,~\mbox{where}~ H^+_1=\int_{0}^{x}g(x)dx,\nonumber\\
&I_3(h)= \int_{\widehat{AB}}-\big(yf_0(x)-g_0(x)\big)dx,
\end{align}
and $A,~B,~L_0(h)$ are defined the same as before.

Then, we have a lemma below:
\begin{lemma}\label{2-M0}
The function $M_0(h)$ has at most $[\frac{m}{2}]$ zeros if it is not zero identically, and the number $[\frac{m}{2}]$ can be reached for some $f_0$. Further, if the polynomial $f_0$ is odd, then $M_0(h)\equiv0.$
\end{lemma}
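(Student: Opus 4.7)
The plan is to mirror the proof of Lemma \ref{lem1-M0}, exploiting the fact that in system \eqref{2} the switching line is $\{x=0\}$, which now contains both endpoints $A=(0,\sqrt{2h})$ and $B=(0,-\sqrt{2h})$ of the arcs $\widehat{AB}$ and $\widehat{BA}$. Starting from the formula for $I_0(h)$ in \eqref{2-I}, I split the integrand into its $f_0$-part and its $g_0$-part and combine the two $f_0$-integrals into the single closed loop integral $-\oint_{L_0}yf_0(x)\,dx$. Applying Green's formula with the clockwise orientation that produced \eqref{1-J1j} converts this into $-\iint_{\mathrm{Int}\,L_0}f_0(x)\,dx\,dy$.

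The key new observation, and the source of the reduced count compared with Lemma \ref{lem1-M0}, is that the $g_0$-contribution now vanishes identically. Indeed $g_0(x)\,dx=dG_0(x)$ is an exact one-form, and since the endpoints of each arc both lie on $\{x=0\}$,
\begin{equation*}
\int_{\widehat{AB}}g_0(x)\,dx = G_0(0)-G_0(0)=0, \qquad \int_{\widehat{BA}}g_0(x)\,dx=0.
\end{equation*}
This is exactly what suppresses the half-integer powers $h^{j+1/2}$ appearing in \eqref{1-final-Ii}, and reduces $M_0(h)$ to a pure integer-power polynomial in $h$.

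Next I evaluate $-\iint_{\mathrm{Int}\,L_0}f_0(x)\,dx\,dy$ in polar coordinates $x=r\cos\theta$, $y=r\sin\theta$ over the disk of radius $\sqrt{2h}$. Exactly as in \eqref{1-sin}, the angular integral $\int_0^{2\pi}\cos^j\theta\,d\theta$ vanishes for $j$ odd and equals $2\pi\prod_{l=1}^{j/2}(2l-1)/(2l)$ for $j$ even. Expanding $f_0(x)=\sum a_j^{(0)}x^j$, only the even-index coefficients survive, yielding a representation of the form
\begin{equation*}
M_0(h) = h\sum_{j=0}^{[m/2]}\alpha_j h^{j},
\end{equation*}
where each $\alpha_j$ is a nonzero multiple of $a_{2j}^{(0)}$ and hence the $\alpha_j$ are independent free parameters.

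Since zeros are sought for $h>0$, the factor $h$ is harmless; what remains is a polynomial of degree $[m/2]$ with independently chosen coefficients, admitting at most $[m/2]$ positive zeros, and this bound is realized by selecting the $a_{2j}^{(0)}$ so as to place $[m/2]$ distinct positive roots, by the same Vandermonde-type argument used at the end of Lemma \ref{lem1-M0}. The odd-$f_0$ case is then immediate: all $a_{2j}^{(0)}$ vanish, so every $\alpha_j=0$ and $M_0\equiv 0$. The only conceptually new step compared with Lemma \ref{lem1-M0} is noticing that the $g_0$-terms drop out because the switching boundary now passes through both endpoints of each half-arc; everything afterwards is a routine parallel of the earlier calculation.
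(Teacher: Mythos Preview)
Your proof is correct and follows essentially the same approach as the paper: both combine the $f_0$-parts into $-\oint_{L_0} y f_0(x)\,dx$, apply Green's formula and polar coordinates to reduce $M_0(h)$ to $\sum_{j=0}^{[m/2]}\tilde a^{(0)}_j h^{j+1}$ with independent coefficients, and conclude in the same way. The only difference is expository: you explicitly justify the vanishing of the $g_0$-contributions via the exact-form observation $g_0(x)\,dx=dG_0$ with both arc endpoints on $\{x=0\}$, whereas the paper simply writes down $I_i(h)=-\oint_{L_0(h)}yf_i(x)\,dx$ without comment.
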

\begin{proof}
For $i=0,1$, we have
\begin{equation*}
I_i(h)=-\oint\limits_{L_0(h)}yf_i(x)dx.
\end{equation*}
Taking \eqref{1-fg} into the above equation, by Green formula and using polar coordinate transformation $x=r\cos \theta,~y=r\sin \theta$, we have
\begin{align}\label{2-Ii}
I_i(h)&=-\iint\limits_{Int L_0} f_i(x)dxdy  =-\sum_{j=0}^{m} a_j^{(i)}  \iint\limits_{Int L_0}  x^j dxdy \nonumber\\
&= -\sum_{j=0}^{m} a_j^{(i)}  \int_0^{\sqrt{2h}}r^{j+1}dr \int_0^{2 \pi} (\cos\theta)^j d\theta\nonumber\\
&=\sum\limits_{j=0}^{[\frac{m}{2}]}\tilde{a}^{(i)}_{j}h^{j+1},~~i=0,1,
\end{align}
where
\begin{equation}\label{2-tildea}
\tilde{a}^{(i)}_{j}=\bigg(-\frac{2 \pi}{j+1} \prod\limits_{l=1}^{j}\frac{2l-1}{l}\bigg)a^{(i)}_{2j},~\tilde{a}^{(i)}_0=-2\pi a^{(i)}_0,~~~~j=1,2,\cdots,[\frac{m}{2}].
\end{equation}

For $i=0$, we have $M_0(h)=I_0(h)$. Taking $\tilde{a}^{(0)}_{j}$ as free parameters, we can conclude that $M_0(h)$ has at most $[\frac{m}{2}]$ isolated positive zeros for $h>0$ and this number can be reached. Furthermore, the second result of Lemma \ref{2-M0} is obviously obtained from \eqref{2-Ii} and \eqref{2-tildea}.
\end{proof}

Further, we have
\begin{lemma}\label{2-M1}
Suppose that the polynomial $f_0$ is odd with degree at most $m$. Then for $n\geq 1$ $($ $n=0$, resp.$)$, $M_1(h)$ has at most $2[\frac{m}{2}]+[\frac{n+1}{2}]$ $($ $2[\frac{m}{2}]+1$, resp.$)$ zeros and this number can be reached.
\end{lemma}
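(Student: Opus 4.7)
My plan is to compute $I_1$, $I_2$, $I_3$ explicitly by parameterizing $L_0(h)$ as $x=\sqrt{2h}\sin\theta$, $y=\sqrt{2h}\cos\theta$, with $\widehat{AB}$ corresponding to $\theta\in[0,\pi]$ and $\widehat{BA}$ to $\theta\in[\pi,2\pi]$. The integral $I_1$ is handled verbatim as in Lemma \ref{2-M0} with $a^{(1)}$ in place of $a^{(0)}$, yielding $I_1(h)=\sum_{j=0}^{[m/2]}\tilde{a}^{(1)}_{j}h^{j+1}$. For $I_3$, the 1-form $g_0(x)\,dx$ is exact while both $A$ and $B$ lie on the $y$-axis, so $\int_{\widehat{AB}}g_0(x)\,dx=0$, and $I_3(h)=-\int_{\widehat{AB}}yf_0(x)\,dx$. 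Using the oddness of $f_0$, together with the positivity of $\int_{0}^{\pi}\cos^2\theta\,\sin^{2j+1}\theta\,d\theta=\int_{-1}^{1}u^2(1-u^2)^j\,du$, one obtains $I_3(h)=\sum_{j=0}^{[m/2]}\beta_j h^{j+3/2}$ with each $\beta_j$ a nonzero multiple of $a^{(0)}_{2j+1}$.

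For $I_2$, the shift $\theta\mapsto\theta+\pi$ on the $\widehat{BA}$-integral together with the oddness of $f_0$ merges the two pieces into $\int_{0}^{\pi}\bigl[H_1^+(\sqrt{2h}\sin\theta)+H_1^+(-\sqrt{2h}\sin\theta)\bigr]\,f_0(\sqrt{2h}\sin\theta)\,d\theta$, where $H_1^+(x)=\int_0^x g(u)\,du$. The symmetrization $H_1^+(x)+H_1^+(-x)$ retains only the odd-index coefficients of $g$, producing the even polynomial $\sum_{k=0}^{[(n-1)/2]}\frac{c_{2k+1}}{k+1}x^{2k+2}$, which vanishes identically when $n=0$. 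Expanding the product and using the positivity of $\int_0^\pi\sin^{2(i+k)+3}\theta\,d\theta$ gives, for $n\geq 1$, $I_2(h)=\sum_{l=0}^{[m/2]+[(n-1)/2]}\bigl(\sum_{i+k=l}\gamma_{i,k}\,a^{(0)}_{2i+1}c_{2k+1}\bigr)h^{l+3/2}$ with explicit positive constants $\gamma_{i,k}$, while $I_2\equiv 0$ when $n=0$.

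Assembling, $M_1(h)$ lies in the span of the $[m/2]+1$ integer-power monomials $h,\dots,h^{[m/2]+1}$ together with the half-integer-power monomials $h^{3/2},\dots,h^{L+3/2}$, where $L=[m/2]+[(n-1)/2]$ for $n\geq 1$ and $L=[m/2]$ for $n=0$. Since distinct real powers of $h$ form a Chebyshev system on $(0,\infty)$, $M_1$ has at most one fewer positive zero than the total count of basis monomials, namely $2[m/2]+[(n+1)/2]$ (respectively $2[m/2]+1$), which is the claimed upper bound.

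The principal obstacle is the sharpness assertion. The integer-power coefficients depend only on the free parameters $a^{(1)}_{2j}$ (each linearly scaling one monomial), so they are clearly independently prescribable. The half-integer coefficients $C_l=\beta_l\,\mathbf{1}_{\{l\leq [m/2]\}}+\sum_{i+k=l}\gamma_{i,k}\,a^{(0)}_{2i+1}c_{2k+1}$ depend bilinearly on the $[m/2]+1$ parameters $a^{(0)}_{2i+1}$ and the $[(n+1)/2]$ parameters $c_{2k+1}$, giving $L+2$ free parameters against $L+1$ equations. I would prove surjectivity of the map $(a^{(0)},c)\mapsto(C_0,\dots,C_L)$ by a descending triangular argument: the top coefficient $C_L=\gamma_{[m/2],[(n-1)/2]}\,a^{(0)}_{2[m/2]+1}\,c_{2[(n-1)/2]+1}$ is prescribed by choosing the highest $c$, and each successive $C_{l-1}$ introduces at most one new parameter that absorbs the remaining target; equivalently, the Jacobian of this bilinear map has rank $L+1$ at a generic point. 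Once all coefficients can be chosen independently, a standard Chebyshev-system argument produces parameter values yielding exactly $2[m/2]+[(n+1)/2]$ (resp.\ $2[m/2]+1$) simple positive zeros of $M_1$.
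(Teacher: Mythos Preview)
Your computation of $I_1$, $I_2$, $I_3$ and the resulting form of $M_1(h)$ follows the paper's proof closely; the only differences are cosmetic (you use a trigonometric parameterisation and the shift $\theta\mapsto\theta+\pi$, while the paper splits each contour into arcs, converts $dt=dx/y$, and evaluates $J_l=\int_0^{\sqrt{2h}}x^{2l+3}(2h-x^2)^{-1/2}dx$ via $t=\sqrt{2h-x^2}$). The upper bound via the Chebyshev property of distinct real powers of $h$ is exactly the paper's argument.

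Where you go beyond the paper is in the sharpness claim, which you rightly flag as the principal obstacle; the paper simply asserts that ``$\tilde a^{(1)}_l,\tilde b_l$'' can be taken as free parameters. Your argument, however, has a genuine gap. The half-integer coefficients have the form
\[
C_l=\beta'_l\,a^{(0)}_{2l+1}\,\mathbf{1}_{\{l\le [m/2]\}}+\sum_{i+k=l}\gamma_{i,k}\,a^{(0)}_{2i+1}c_{2k+1},
\]
and you propose to show surjectivity of $(a^{(0)},c)\mapsto(C_0,\dots,C_L)$ by a descending triangular argument, ``equivalently'' by full rank of the Jacobian at a generic point. Full-rank Jacobian gives only \emph{local} surjectivity, and the descending scheme breaks because the parameters $a^{(0)}_{2i+1}$ with $i<[m/2]$ enter both the linear $I_3$-part and the bilinear $I_2$-part, so the steps are coupled. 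In fact the map is \emph{not} onto: already for $[m/2]=[(n-1)/2]=1$, fixing $a^{(0)}_3$ and eliminating $c_1,c_3$ from the three equations leaves a quadratic in $a^{(0)}_1$ whose discriminant is negative for some targets $(C_0,C_1,C_2)$. Hence the sentence ``once all coefficients can be chosen independently'' does not follow from your argument. To complete the sharpness you must argue differently: for instance, show directly that the image of the coefficient map meets the open set of coefficient vectors giving the maximal number of simple positive zeros, or exhibit explicit parameter values as the paper does in its worked example for $m=n=3$.
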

\begin{proof}
Let $A_1=(\sqrt{2h},0),~B_1=(-\sqrt{2h},0)$ be two points of $L_0$ intersected with $x$-axis, see Fig. \ref{fig2}.
Noting that $f_0$ is odd and $\frac{dx}{dt}=y$ along $L_0(h)$, then by \eqref{2-I}, one can derive that
\begin{align}\label{2-I2}
I_2(h)=&\int_{\widehat{AA_1}}H^+_1f_0(x)\frac{dx}{y}+\int_{\widehat{A_1B}}H^+_1f_0(x)\frac{dx}{y}\nonumber\\
&-\Big(\int_{\widehat{BB_1}}H^+_1f_0(x)\frac{dx}{y}+\int_{\widehat{B_1A}}H^+_1f_0(x)\frac{dx}{y}\Big) \nonumber\\
=&2\int_0^{\sqrt{2h}}\frac{H^+_1f_0(x)}{\sqrt{2h-x^2}}dx+2\int_0^{-\sqrt{2h}}\frac{H^+_1f_0(x)}{\sqrt{2h-x^2}}dx \nonumber\\
=&2\int_0^{\sqrt{2h}}\frac{f_0(x)}{\sqrt{2h-x^2}}\big(H^+_1(x)+H^+_1(-x)\big)dx,
\end{align}
where
\begin{align}\label{2-H+H}
H^+_1(x)+H^+_1(-x)&=\int_0^xg(x)dx+\int_0^{-x}g(x)dx=\int_0^x\big(g(x)-g(-x)\big)dx\nonumber\\
&=\int_0^x2\sum_{i=0}^{\tilde{n}}c_{2i+1}x^{2i+1}dx=2\sum_{i=0}^{\tilde{n}}\frac{c_{2i+1}}{2i+2}x^{2i+2},
\end{align}
where $\tilde{n}=[\frac{n+1}{2}]-1,$ for $n\geq1$.

Then taking \eqref{2-H+H} into \eqref{2-I2} and letting $f_0=\sum_{i=0}^{[\frac{m}{2}]} a_{2i+1}^{(0)}x^{2i+1}$, we have
\begin{align}\label{2-barI1}
I_2(h)&=4\int_0^{\sqrt{2h}} \frac{1}{\sqrt{2h-x^2}} \sum_{i=0}^{[\frac{m}{2}]} a_{2i+1}^{(0)}x^{2i+1} \cdot \sum_{i=0}^{\tilde{n}}\frac{c_{2i+1}}{2i+2}x^{2i+2} dx \nonumber\\
&=\int_0^{\sqrt{2h}} 4\sum_{l=0}^{[\frac{m}{2}]+\tilde{n}}\Big(\sum_{0\leqslant i \leqslant [\frac{m}{2}] \atop 0\leqslant j \leqslant \tilde{n} }^{i+j=l}  \frac{a_{2i+1}^{(0)}c_{2j+1}}{2j+2}\Big) \frac{x^{2l+3}}{\sqrt{2h-x^2}} dx\nonumber\\
&=\sum_{l=0}^{[\frac{m}{2}]+\tilde{n}} \Big(\sum_{0\leqslant i \leqslant [\frac{m}{2}] \atop 0\leqslant j \leqslant \tilde{n} }^{i+j=l}  \frac{2a_{2i+1}^{(0)}c_{2j+1}}{j+1} \Big)J_l,
\end{align}
where for each $l$,
\begin{equation*}
J_l=\int_0^{\sqrt{2h}} \frac{x^{2l+3}}{\sqrt{2h-x^2}} dx.
\end{equation*}
Letting $t=\sqrt{2h-x^2}$, then for each $J_l$, we have
\begin{align*}
J_l&=\int_0^{\sqrt{2h}} (2h-t^2)^{l+1}dt \nonumber\\
&=\sum_{k=0}^{l+1} C^k_{l+1} (-1)^{k} (2h)^{l+1-k} \int_0^{\sqrt{2h}} t^{2k} dt  \nonumber\\
&= \Big(\sum_{k=0}^{l+1} \frac{C^k_{l+1} (-1)^{k}2^{l+\frac{3}{2}}}{2k+1}\Big)h^{l+\frac{3}{2}}.
\end{align*}
By taking the above equation in \eqref{2-barI1}, we have
\begin{equation}\label{2-barI}
I_2(h)=\sum_{l=0}^{[\frac{m}{2}]+\tilde{n}} a_l^* h ^{l+\frac{3}{2}},
\end{equation}
where
\begin{align}\label{2-a*}
&a_l^*=\Big(\sum_{k=0}^{l+1} \frac{C^k_{l+1} (-1)^{k}2^{l+\frac{3}{2}}}{2k+1}\Big)\cdot\Big(\sum_{0\leqslant i \leqslant [\frac{m}{2}] \atop 0\leqslant j \leqslant \tilde{n} }^{i+j=l}  \frac{2a_{2i+1}^{(0)}c_{2j+1}}{j+1} \Big),\nonumber\\
&\mbox{for}~~l=0,\cdots,[\frac{m}{2}]+\tilde{n},~~\tilde{n}=[\frac{n+1}{2}]-1,~~n\geq1.
\end{align}

Then, denoting the set $\widehat{AB}\bigcup \overrightarrow{BA}$ by $L_0^+(h)$, it follows from \eqref{2-I} that
\begin{align*}
I_3(h)&=-\int_{\widehat{AB}}yf_0(x)dx=-\Big(\int_{\widehat{AB}}yf_0(x)dx+\int_{\overrightarrow{BA}}yf_0(x)dx-\int_{\overrightarrow{BA}}yf_0(x)dx\Big)\nonumber\\
&=-\oint\limits_{L_0^+} yf_0(x)dx=-\iint\limits_{IntL_0^+}f_0(x)dxdy=-\sum_{i=0}^{[\frac{m}{2}]}a_{2i+1}^{(0)}\iint\limits_{IntL_0^+}x^{2i+1}dxdy.
\end{align*}
Again by using polar coordinate transformation $x=r\cos\theta,~y=r\sin\theta$, we can obtain
\begin{align}\label{2-hatI}
I_3(h)&=-\sum_{i=0}^{[\frac{m}{2}]}a_{2i+1}^{(0)} \int_0^{\sqrt{2h}}r^{2i+2}dr\cdot\int_{-\frac{\pi}{2}}^{\frac{\pi}{2}}(\cos\theta)^{2i+1}d\theta \nonumber\\
&=-\sum_{i=0}^{[\frac{m}{2}]}a_{2i+1}^{(0)} \frac{(\sqrt{2h})^{2i+3}}{2i+3}2\int_0^{\frac{\pi}{2}}(\cos\theta)^{2i+1}d\theta\nonumber\\
&=\sum_{i=0}^{[\frac{m}{2}]} \hat{a}_i h^{i+\frac{3}{2}},
\end{align}
where
\begin{equation}\label{2-hata}
\hat{a}_i=\Big(-\frac{2^{i+\frac{5}{2}}}{2i+3} \int_0^{\frac{\pi}{2}}(\cos\theta)^{2i+1}d\theta\Big)a_{2i+1}^{(0)},~~i=0,\cdots,[\frac{m}{2}].
\end{equation}
Hence, for $n\geq1$, substituting \eqref{2-Ii}, \eqref{2-barI} and \eqref{2-hatI} into \eqref{2-M}, we can obtain that
\begin{equation} \label{2-mn1}
M_1(h)=\sum\limits_{l=0}^{[\frac{m}{2}]}\tilde{a}^{(1)}_{l}h^{l+1}+\sum_{l=0}^{[\frac{m}{2}]+[\frac{n+1}{2}]-1} \tilde{b}_l h ^{l+\frac{3}{2}},
\end{equation}
where
\begin{equation*}
\tilde {b}_l=\left\{
\begin{aligned}
& a_l^*+ \hat{a}_l,&&l=0,\cdots,[\frac{m}{2}],\\
&a_l^*,&&l=[\frac{m}{2}]+1,\cdots,[\frac{m}{2}]+[\frac{n+1}{2}]-1,
\end{aligned}\right.
\end{equation*}
where $\tilde{a}^{(1)}_l,~a_l^*,~\hat{a}_l$ are defined as in \eqref{2-tildea}, \eqref{2-a*} and \eqref{2-hata} respectively.

For $n=0$, from \eqref{2-I2} and \eqref{2-H+H}, we have $I_2(h)=0.$ Thus, together with \eqref{2-Ii} and \eqref{2-hatI}, it implies that
\begin{equation}\label{2-mn0}
M_1(h)=\sum\limits_{l=0}^{[\frac{m}{2}]}\tilde{a}^{(1)}_{j}h^{j+1}+\sum_{l=0}^{[\frac{m}{2}]} \hat{a}_j h ^{j+\frac{3}{2}},
\end{equation}
where $\tilde{a}_j^{(1)},~\hat{a}_j$ are defined as in \eqref{2-tildea} and \eqref{2-hata} respectively.

In summary, by taking $\tilde{a}^{(1)}_l,~\tilde {b}_l$ ({\em $\tilde{a}_j^{(1)},~\hat{a}_j$ resp.})  as free parameters, we can see that Eq. \eqref{2-mn1} ({ \em Eq. \eqref{2-mn0} resp.}) has at most $2[\frac{m}{2}]+[\frac{n+1}{2}]$ ({\em $2[\frac{m}{2}]+1$ resp.}) zeros if $M_1(h)$ is not zero identically.
\end{proof}

Thus we have proved the following theorem:
\begin{theorem}\label{thii-1}
Let $0< \epsilon \ll \lambda \ll 1$ and $M_0(h)\equiv0,~M_1(h)\not\equiv 0.$ Then for $n\geq1$ $($$n=0$, resp.$)$, system \eqref{2} has at most $2[\frac{m}{2}]+[\frac{n+1}{2}]$ $($$2[\frac{m}{2}]+1$, resp.$)$ limit cycles bifurcating from the periodic orbits of the system $\dot{x}=y,~\dot{y}=-x.$ And this number can be reached.
\end{theorem}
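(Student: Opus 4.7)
The proof will be a direct synthesis of the preceding lemmas, running in complete parallel to the proof of Theorem \ref{thi-1} for system \eqref{0}. The plan is to expand the first-order Melnikov function using Lemma \ref{lemx}, exploit the hypothesis $M_0\equiv0$ to reduce to $M_1$, apply Lemma \ref{2-M1} to bound the zeros of $M_1$, and finally invoke Lemma \ref{lemhan} to translate zeros of the Melnikov function into limit cycles.

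More precisely, I would first write
\begin{equation*}
M(h,\lambda) = M_0(h) + \lambda M_1(h) + O(\lambda^2)
\end{equation*}
by Lemma \ref{lemx}, and observe that under $M_0\equiv0$ and $M_1\not\equiv0$ the leading behaviour in $\lambda$ is governed by $M_1$, so for $0<\epsilon\ll\lambda\ll1$ the isolated zeros in $h$ of $M(h,\lambda)$ are perturbations of the isolated zeros of $M_1(h)$. Next I would observe that the hypothesis $M_0\equiv0$ is not vacuous: by the explicit formulas \eqref{2-Ii}--\eqref{2-tildea}, $M_0(h)=\sum_{j=0}^{[m/2]}\tilde a^{(0)}_j h^{j+1}$ with each $\tilde a^{(0)}_j$ a nonzero scalar multiple of $a^{(0)}_{2j}$. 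Hence $M_0\equiv0$ forces every even-indexed coefficient of $f_0$ to vanish, which means $f_0$ is an odd polynomial of degree at most $m$. This places us exactly in the hypothesis of Lemma \ref{2-M1}.

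Lemma \ref{2-M1} then supplies the sharp count: $M_1(h)$ has at most $2[\tfrac{m}{2}]+[\tfrac{n+1}{2}]$ zeros when $n\geq 1$, and at most $2[\tfrac{m}{2}]+1$ zeros when $n=0$, and in each case the bound is attained. Combining this with Lemma \ref{lemhan}(ii) gives the upper bound on the number of limit cycles of \eqref{2} bifurcating from the period annulus of $\dot x=y,\dot y=-x$, and Lemma \ref{lemhan}(i) gives the matching lower bound once one chooses free parameters so that $M_1$ has exactly the stated number of simple positive zeros.

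I do not expect any substantial obstacle. The only delicate point is that, in extracting the lower bound, one must know that the coefficients $\tilde a^{(1)}_l,\tilde b_l$ appearing in \eqref{2-mn1} (respectively $\tilde a^{(1)}_j,\hat a_j$ in \eqref{2-mn0}) are algebraically independent, so that $M_1$ may be tuned to have the maximal number of simple zeros in $(0,+\infty)$ without conflicting with the constraint $M_0\equiv0$. This independence, however, has already been verified inside the proof of Lemma \ref{2-M1}, since the parameters entering $M_1$ come from the fresh coefficients $a^{(1)}_{2i},b^{(1)}_{2i},c_{2i+1}$, none of which appear in the formula for $M_0$. Thus the theorem will follow with no additional work beyond quoting the lemmas.
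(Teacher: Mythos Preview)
Your proposal is correct and follows exactly the paper's approach: the paper's proof consists of the single sentence ``Thus we have proved the following theorem'' immediately after Lemma~\ref{2-M1}, so what you have written is a faithful unpacking of that inference (observing that $M_0\equiv0$ forces $f_0$ odd via \eqref{2-Ii}--\eqref{2-tildea}, then invoking Lemma~\ref{2-M1} and Lemma~\ref{lemhan}). One small slip: the coefficients $b^{(1)}_{2i}$ do not actually enter $M_1$ for system \eqref{2}, since the integrals of $g_i(x)\,dx$ over arcs with both endpoints on $x=0$ vanish; the genuine free parameters are $a^{(1)}_{2j}$, $a^{(0)}_{2j+1}$, and $c_{2j+1}$, but this does not affect your argument as you are simply citing the independence already established inside Lemma~\ref{2-M1}.
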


Then if we replace $\mbox{sgn}(y)$ with $\mbox{sgn}(x)$ into \eqref{sgn}, Theorem \ref{thii} follows from \eqref{sgn} and Theorem \ref{thii-1} directly.

Next we give an example to illustrate Theorem \ref{thii}:
\begin{example}
Take in \eqref{2} $m=n=3$ and
\begin{align*}
&a_1^{(0)}=357\sqrt{2}, ~a_3^{(0)}=-\frac{93653}{260}\sqrt{2},~a_0^{(1)}=-\frac{120}{\pi}, ~a_2^{(1)}=-\frac{300}{\pi}, \\
&c_0=0,~ c_1=\frac{1}{4},~c_3=-\frac{4133}{7616},\\
& a_0^{(0)}=a_2^{(0)}=a_1^{(1)}=a_3^{(1)}=0, ~b_j^{(i)}=0~(i=0,1,~j=0,\cdots,3).
\end{align*} 
Similar to Example \ref{ex1}, it comes from the proof of Theorem \ref{thii} that the following system 
\begin{equation}\label{ex2}
\left\{\begin{aligned}
&\dot{x}=y,\nonumber\\
&\dot{y}=-x-\lambda\big[y\hat{f}+\mbox{sgn}(x)\hat{g}\big],
\end{aligned}\right.
\end{equation}
where 
\begin{align*}
&\hat{f}=\frac{93653\sqrt{2}\epsilon}{260\lambda}x^3-\frac{300\epsilon}{\pi}x^2+\frac{357\sqrt{2}\epsilon}{\lambda}x-\frac{120\epsilon}{\pi},\\
&\hat{g}=-\frac{4133}{7616}x^3+\frac{1}{4}x
\end{align*}
has four limit cyles.
\end{example}

\section*{Acknowledgment}
The author would like to give thanks to professor Maoan Han for introducing the topic together with his helpful discussions during the preparation of the paper and the anonymous referees for the valuable suggestions.

\end{document}